\theoremstyle{plain}
\newtheorem{theorem}{Theorem}[section]
\newtheorem{lemma}[theorem]{Lemma}
\newtheorem{proposition}[theorem]{Proposition}
\newtheorem{corollary}[theorem]{Corollary}
\theoremstyle{definition}
\newtheorem{definition}[theorem]{Definition}
\newtheorem{remark}[theorem]{Remark}
\theoremstyle{remark}
\newcommand{\R}{\mathbb R}
\newcommand{\dd}{\mathrm{d}}
\newcommand{\dx}{\;\dd x}
\newcommand{\dy}{\;\dd y}
\newcommand{\meno}{\!\setminus\!}
\newcommand{\e}{\varepsilon}
\newcommand{\p}{\varphi}
\renewcommand{\o}{\Omega}
\newcommand{\f}{\mathcal{F}}
\newcommand{\g}{\mathcal{G}}
\newcommand{\average}{{\mathchoice {\kern1ex\vcenter{\hrule height.4pt
width 6pt
depth0pt} \kern-9.7pt} {\kern1ex\vcenter{\hrule height.4pt width 4.3pt
depth0pt}
\kern-7pt} {} {} }}
\title[Piecewise constant images in dimension one]{Exact solutions for the denoising problem of piecewise constant images in dimension one}
\author{Riccardo Cristoferi}
\begin{document}

\begin{abstract}
In this paper we propose a method to determine explicitly the solution of the total variation denoising problem with an $L^p$ fidelity term, where $p>1$, for piecewise constant initial data in dimension one.
\end{abstract}

\maketitle


\section{Introduction}

When an image is acquired it comes, unavoidably, with some distortion. Indeed, external conditions, other then defects or limitations of the instruments that are used to obtain them, affect the quality of the acquired data. Thus, in order to be able to perform any task on the image, it is important to be able to recover the \emph{clean} version in the best possible way, \emph{i.e.}, with optimal fidelity.
If we denote it by $u$ and the acquired, corrupted image by $f$, it is usually assumed that
the two are related via:
\begin{equation}\label{eq:ip}
f=Au+n\,,
\end{equation}
where $A$ is a bounded linear operator representing the blurring effect and $n$ is the implementation of the random noise.
One of the aims of image reconstruction is deblurring and denoising $f$ in order to recover $u$ (see \cite{AubKor, ChaShe}).

Here we are interested in the denoising problem, \emph{i.e.}, when the operator $A$ is the identity and we have to remove the noise.
Problem \eqref{eq:ip} is, in general, ill-posed (in the sense of Hadamard) and thus we need to regularize it (see \cite{AcaVog, TikArs}).
A widely used variational technique for this purpose was introduced by Rudin, Osher and Fatemi in \cite{ROF}, where they proposed to recover $u$ in an open set $\o\subset\R^N$ via the minimization problem
\begin{equation}\label{eq:p}
\min_{u\in BV(\o),\,\,\|u-f\|^2_{L^2}=\sigma^2}\, |Du|(\o)\,,
\end{equation}
for some fixed $\sigma>0$, where $f$ is assumede to be in $L^2(\o)$ and $|Du|(\o)$ denotes the total variation of the function $u$ in $\o$. The choice of $BV(\o)$ as the functional space where to perform the minimization is motivated by the fact that it allows for the presence of discontinuities in the solutions representing the sharp edges of the objects in the image and the so called staircase effect due to the Cantor part of the derivative that takes case of the fine texture.
There are some interesting cases though, where the real image is represented by a function of bounded variation (see \cite{GouMor})). 
The minimization problem \eqref{eq:p} has been shown to be equivalent to the following \emph{penalized} minimization problem (known as the total variation denoising model with $L^2$ fidelity term)
\begin{equation}\label{eq:cl}
\min_{u\in BV(\o)} |Du|(\o) + \lambda\|u-f\|^2_{L^2(\o)}\,,
\end{equation}
for some Lagrange multiplier $\lambda>0$ (see \cite{ChaLio}).

Today's literature on the study of problem \eqref{eq:cl} is extensive, and here we limit ourselves to recall that
properties of the solutions have been studied, for instance, in \cite{All1, All2, All3, BelCasNov, BelCasNov2, CasChaNov, CasChaNov2, ChaEse, ChoFonZwi, DobSan, DuvAujGou, HadMey, Mey, Rin, Val, Ves}, the analysis of variants of \eqref{eq:cl} that use the generalized total variation have been performed in \cite{BreKunPoc, BreKunVal, PapBre, PapVal, PapSch}, anisotropic models are undertaken in \cite{ChoGenObe, DroBer, EseOsh, LouZenOscXin}, while the effects of considering high-order models have been investigated in \cite{ChaMarMul, ChaKanShe, DMFLM, JanPapSch, PapSch}.
Finally, other variants of \eqref{eq:p} have been addressed in \cite{AmbMas, AmbMas2, OshSolVes}, and algorithmic considerations may be found in \cite{BauColMor, Cha, ChaPoc, Nik}.\\

In this paper we study the one dimensional case where $f$ is a piecewise constant function,
and we generalize the $L^2$ fidelity term to an $L^p$ fidelity term, with $p\in[1,\infty)$.
To be precise, we consider the minimization problem
\begin{equation}\label{eq:minpbg}
\min_{u\in BV(\o)}\g(u)\,,
\end{equation}
where $\o:=(a,b)\subset\R$ and
\[
\g(u):=|Du|(\o)+\lambda\|u-f\|^p_{L^p(\o)}\,,
\]
for a given initial piecewise constant data $f$.
Our aim is to provide a method for solving the minimization problem \eqref{eq:minpbg} in the case $p>1$.

We next explain the main idea behind the strategy we propose.
The rigid structure of the initial data forces the solution to be piecewise constant itself, with jump set contained in the one of $f$ (see Corollary \ref{cor:structure}). Moreover, a simple truncation argument shows that the solution takes values within the minimum and the maximum of $f$.
Hence, the minimization problem \eqref{eq:minpbg} with $f$ of the form
\[
f(x)=\sum_{i=1}^k f_i\,\chi_{(x_{i-1},x_i)}(x)\,,\quad\quad f_i\in\R\,,
\]
is equivalent to the following minimization problem
\begin{equation}\label{eq:problemintro}
\min_{v\in Q}G(v)\,,
\end{equation} 
where $Q:=[\min f, \max f]^k$ and $G:\R^k\rightarrow\R$ is the function defined as
\[
G(v):=\sum_{i=2}^k |v_i-v_{i-1}| + \lambda \sum_{i=1}^k L_i|f_i-v_i|^p\,,
\]
with $v=(v_1,\dots,v_k)$ and $L_i:=x_i-x_{i-1}$. The function $G$ is convex but it lacks differentiability on the hyperplanes where $\{v_{i-1}=v_i\}$.
Thus, in principle, one should minimize the function $G$ over several compact regions and then compare all the minimum values
in order to find the global minimizer.\\
Our method aims at overcoming this difficulty. We will be able, for each $\lambda$, to predict \emph{a priori} - that is without knowing explicitly $u^\lambda$ (the minimizer of $G$ corresponding to the parameter $\lambda$) - what the relative position of each $u^\lambda_i$ with respect to $u^\lambda_{i-1}$ and $f_i$ will be.
Knowing that, it is possible to look for the minimizer $u^\lambda$ only in a specific region of $\R^k$, where the absolute values present in the expression of $G$ can be written explicitly.
Hence, $u^\lambda$ can be found by solving the appropriate Euler-Lagrange equation.

We give a more detailed description of our method: the function $\lambda\mapsto u^\lambda$ is continuous
and $u^\lambda\rightarrow f$ as $\lambda\rightarrow\infty$ (see Lemma \ref{lem:prop1}).
Hence, for $\lambda\gg1$, we have that $u^\lambda_i$ is very close to $f_i$, and
this allows us to tell the relative position of $u^\lambda_i$ with respect to $u^\lambda_{i-1}$.
Moreover, thanks to the qualitative properties of the solutions we will prove in Lemma \ref{lem:prop2} and Proposition \ref{prop:mono}, we will also be able to tell the relative position of each $u_i$ with respect to $f_i$. 
These information allow us to write explicitly the absolute values present in the expression of $G$, as well as to write explicitly the Euler-Lagrange equation, whose solution will give us the minimizer $u^\lambda$.
With this reasoning, we find the mininimizers for $\lambda$ large (how large it has to be will be determined a posteriori).

The idea now is to let $\lambda$ decrease. Since $u^\lambda$ is constant for small values of $\lambda$ (see Lemma \ref{lem:const}), by continuity of $\lambda\mapsto u^\lambda$ eventually two neighboring values $u^\lambda_i$ and $u^\lambda_{i-1}$ will happen to be the same. The main technical result (Theorem \ref{thm:split}) tells us that the same will be true for all smaller values of $\lambda$. As a result we now have to consider the function $G$ restricted to the subspace $\{v_{i-1}=v_i\}$, thus reducing the number of variables.
By continuity of $\lambda\mapsto u^\lambda$, it is then possible to predict the relative position of every $u_i$ with respect to $u_{i-1}$, while the qualitative properties of the solutions will give us the relative position of $u_i$ with respect to $f_i$.
As a consequence, also in this case, we are able to write explicitly the Euler-Lagrange equation.\\
We observe that price to pay for applying this method is that, in order to determine the solution of the minimization problem \eqref{eq:problemintro} for a certain value $\bar{\lambda}$, we first need to know it for all $\lambda>\bar{\lambda}$. This, in the end, boils down to solve some equations, whose number can be roughly bounded above by $k(k+1)/2$.\\

Our result is related to the work of Strong and Chan (see \cite{StrCha}), where the authors consider the minimization problem \eqref{eq:problemintro} in the special case $p=2$, but allowing the initial data $f$ to be a piecewise constant function with noise. Under certain conditions on the amplitude of the noise, they are able to determine the solution of the minimization problem \eqref{eq:problemintro} in the case $\lambda\gg1$.\\

Just a couple of words about the case $p=1$. The reason why the strategy described above fails for $p=1$ is because we cannot use the continuity of the map $\lambda\mapsto u^\lambda$. Indeed, even if for $p=1$ there is no uniqueness for the solution of the minimization problem \eqref{eq:problemintro} (see an example in Proposition \ref{prop:p1}), there is always a solution taking only the values that $f$ takes (see Corollary \ref{cor:p1}). But this jumping behavior of the solution prevents us to use continuity arguments, which are at the core of the strategy sketched above.
Nonetheless, the possibility of obtaining an analytic method for computing the solution in the case $p=1$ is currently under investigation.\\

The paper is organized as follows. After a brief recalling of the main properties of one dimensional functions of bounded variation in Section 2, we devote Section 3 to stating and proving basic results we will need in the sequel concerning the solutions of our minimization problem.
In Section 4 we illustrate with a simple case the different behaviors of the solution in the cases $p=1$ and $p>1$.
Section 5 contains the main technical results needed to justify the strategy to determine the solution of the minimization problem \eqref{eq:problemintro} we describe in Section 6 we conclude with an explicit example.


\section{Preliminaries}

In this section we review basic definitions of one dimensional functions of bounded variation. For more details, see \cite{AFP, Leo}. Here $a,b\in\R$ with $a<b$.

\begin{definition}
Let $u:(a,b)\rightarrow\R$. The \emph{pointwise variation} of $u$ in $(a,b)$ is defined as
\[
pV(u;a,b):=\sup\left\{\, \sum_{i=1}^{n-1}|u(x_{i+1})-u(x_i)| \,:\, a<x_1<\dots<x_n<b \,\right\}\,.
\]
\end{definition}
 
\begin{definition}
For $u\in L^1\bigl((a,b)\bigr)$ its \emph{total variation} in $(a,b)$ is given by
\[
|Du|\bigl((a,b)\bigr):=\sup\left\{\, \int_a^b \p'u \,\dd x \,:\, \p\in C^\infty_0\bigl((a,b)\bigr)\,, |\p|\leq1  \,\right\}\,.
\]
If $|Du|\bigl((a,b)\bigr)<\infty$, we say that $u$ belongs to the space $BV\bigl((a,b)\bigr)$ of functions of bounded variation in $(a,b)$.

In this case, $Du$ is a finite Radon measure on $(a,b)$.
\end{definition}

\begin{definition}
Let $u\in BV\bigl((a,b)\bigr)$. We define the \emph{jump set} of $u$ as
\[
J_u:=\bigl\{\, x\in(a,b) \,:\, |Du|(\{x\})\neq 0  \,\bigr\}\,.
\]
\end{definition}

The relation among the total and the pointwise variation is given by the following result.

\begin{theorem}\label{thm:ev}
Let $u\in L^1\bigl((a,b)\bigr)$ and define the \emph{essential variation} of $u$ as
\begin{equation}\label{eq:ev}
eV(u;a,b):=\inf\{\, pV(v;a,b) \,:\, v=u\quad L^1-a.e. \text{ in } (a,b) \,\}\,.
\end{equation}
The infimum defining $eV(u;a,b)$ in \eqref{eq:ev} is achieved and it coincides with $|Du|\bigl((a,b)\bigr)$.
\end{theorem}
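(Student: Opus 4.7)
The plan is to establish two inequalities: $|Du|\bigl((a,b)\bigr)\leq eV(u;a,b)$ via a Riemann-sum argument applied to an arbitrary representative, and $eV(u;a,b)\leq|Du|\bigl((a,b)\bigr)$ via an explicit representative that realizes the right-hand side. The latter will also show the infimum in \eqref{eq:ev} is attained.

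For the first inequality I would show $|Du|\bigl((a,b)\bigr)\leq pV(v;a,b)$ for every $v$ with $v=u$ a.e. If $pV(v;a,b)=\infty$ there is nothing to prove, so assume it is finite. Bounded pointwise variation forces $v$ to have right and left limits everywhere, so I would first pass to the right-continuous modification $\tilde v(x):=\lim_{y\downarrow x}v(y)$; this differs from $v$ on a countable set, still represents $u$ a.e., and satisfies $pV(\tilde v;a,b)\leq pV(v;a,b)$. Given $\p\in C^\infty_0\bigl((a,b)\bigr)$ with $|\p|\leq 1$, pick partitions $a<x^{(n)}_0<\dots<x^{(n)}_{N_n}<b$ of vanishing mesh containing $\supp\p$ strictly in the interior. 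Approximating $\tilde v$ by the step function $v_n:=\sum_i \tilde v(x^{(n)}_i)\chi_{[x^{(n)}_i,x^{(n)}_{i+1})}$ and using summation by parts together with $\p(x^{(n)}_0)=\p(x^{(n)}_{N_n})=0$ yields
\[
\int_a^b \p'v_n\dx = -\sum_i\bigl(\tilde v(x^{(n)}_{i+1})-\tilde v(x^{(n)}_i)\bigr)\p(x^{(n)}_{i+1}),
\]
whose absolute value is bounded by $pV(\tilde v;a,b)$. Dominated convergence (valid since $v_n\to\tilde v$ at every continuity point of $\tilde v$ and is uniformly bounded by $|\tilde v(x^{(n)}_0)|+pV(\tilde v;a,b)$) then gives $\bigl|\int_a^b \p'u\dx\bigr|\leq pV(v;a,b)$. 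Taking the supremum over $\p$ proves the claim, hence $|Du|\bigl((a,b)\bigr)\leq eV(u;a,b)$.

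For the reverse inequality I construct a distinguished representative. Since $Du$ is a finite Radon measure on $(a,b)$, define $F(x):=Du\bigl((a,x)\bigr)$ for $x\in(a,b)$. Then $F$ is left-continuous with $DF=Du$ distributionally, so $u-F$ has zero distributional derivative and equals some constant $c\in\R$ almost everywhere. Set $w:=F+c$, which is a representative of $u$. For any ordered sequence $a<x_1<\dots<x_n<b$,
\[
\sum_{i=1}^{n-1}|w(x_{i+1})-w(x_i)|=\sum_{i=1}^{n-1}\bigl|Du\bigl([x_i,x_{i+1})\bigr)\bigr|\leq\sum_{i=1}^{n-1}|Du|\bigl([x_i,x_{i+1})\bigr)\leq|Du|\bigl((a,b)\bigr),
\]
so $pV(w;a,b)\leq|Du|\bigl((a,b)\bigr)$, and combined with the first step this collapses to the chain $|Du|\bigl((a,b)\bigr)=eV(u;a,b)=pV(w;a,b)$, with the infimum attained at $w$.

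The main obstacle is the first step: the representative $v$ is only defined up to a null set, so one cannot simply evaluate it at the partition nodes. The remedy is the reduction to a right-continuous modification, which preserves the property of representing $u$ and can only decrease the pointwise variation. Once that reduction is in place, the summation-by-parts identity and the passage to the limit via dominated convergence are routine measure-theoretic bookkeeping.
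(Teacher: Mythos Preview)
The paper does not prove this theorem. It appears in the preliminaries section as a standard fact about one-dimensional $BV$ functions, with the reader referred to \cite{AFP, Leo} for details; no argument is given in the paper itself. So there is no ``paper's proof'' to compare against.

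Your argument is essentially the standard one found in those references and is correct in outline. Two small points are worth tightening. First, in the second step you write ``Since $Du$ is a finite Radon measure on $(a,b)$'', but the theorem is stated for arbitrary $u\in L^1\bigl((a,b)\bigr)$, so $|Du|\bigl((a,b)\bigr)$ may be infinite. You should dispose of that case explicitly: if $|Du|\bigl((a,b)\bigr)=+\infty$ then your first inequality already gives $eV(u;a,b)=+\infty$, every representative has infinite pointwise variation, and the infimum is attained trivially. Only then may you assume $u\in BV$ and invoke the measure $Du$. Second, your closing paragraph slightly misstates the ``obstacle'': the representative $v$ in Step~1 is a fixed function, not an equivalence class, so one \emph{can} evaluate it at partition nodes. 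The actual reason for passing to the right-continuous modification $\tilde v$ is to guarantee that the step functions $v_n$ converge a.e.\ to a representative of $u$ (namely $\tilde v$), which is needed for the dominated-convergence step; an arbitrary $v$ of finite variation already has one-sided limits and is bounded, but without right-continuity the convergence $v_n\to v$ can fail on the (countable) discontinuity set in a way that matters for the identity $\int\varphi' u\,\dd x=\lim\int\varphi' v_n\,\dd x$.
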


Theorem \eqref{thm:ev} allows us to single out some well behaving representative of a BV function.

\begin{definition}
Let $u\in BV\bigl((a,b)\bigr)$. Any $v$ with $v=u$ $L^1$-a.e. in $(a,b)$ such that $pV(v;a,b)=eV(u;a,b)=|Du|\bigl((a,b)\bigr)$ is called a \emph{good representative} of $u$.
\end{definition}


\section{The general structure of the solutions}

This section is devoted to stating and proving some basic results we need concerning the solution of the minimization problem \eqref{eq:minpbg}. Albeit some of these properties may be known, we present here the proofs for the reader's convenience.\\

We start by proving that a solution to the minimization problem \eqref{eq:problemintro} with a piecewise constant initial data $f$ needs to have the same structure as $f$, \emph{i.e.}, it has to be a piecewise constant function with its jump set contained in the jump set of $f$.
In higher dimension, the inclusion $J_u\subset J_f$ is well known (see \cite{CasChaNov} and \cite{Val}) in the case $p>1$, while it is not always true if $p=1$ (see \cite{ChaEse} and \cite{DuvAujGou}).
The following result has been proved, with a different argument, in \cite{BreKunVal}.

\begin{theorem}
Let $f\in L^1\bigl((a,b)\bigr)$ and let $u\in BV\bigl((a,b)\bigr)$ be a solution of \eqref{eq:minpbg}.
If $f$ is constant in $(c,d)\subset(a,b)$, then $u$ is constant in $(c,d)$.
\end{theorem}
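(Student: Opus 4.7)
The plan is to exploit the local structure of the problem. Let $u$ be a solution of \eqref{eq:minpbg}, fix a good representative, and denote by $c_0$ the constant value of $f$ on $(c,d)$; write $u_\alpha:=u(c^-)$ and $u_\beta:=u(d^+)$ for the outer one-sided approximate limits. The first step is to observe that $u|_{(c,d)}$ minimizes, among all $w\in BV\bigl((c,d)\bigr)$, the local functional
\[
H(w):=|Dw|\bigl((c,d)\bigr)+|w(c^+)-u_\alpha|+|u_\beta-w(d^-)|+\lambda\int_c^d|w-c_0|^p\dx,
\]
since gluing $w$ to $u$ outside $(c,d)$ produces an admissible competitor for \eqref{eq:minpbg} whose energy difference with $u$ equals $H(w)-H(u|_{(c,d)})$.

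The heart of the argument compares an arbitrary $w\in BV\bigl((c,d)\bigr)$ with its mean value $\bar w:=\frac{1}{d-c}\int_c^d w\dx$. For the fidelity, Jensen's inequality applied to $t\mapsto|t-c_0|^p$ yields $\int_c^d|w-c_0|^p\dx\geq(d-c)|\bar w-c_0|^p$, strict for $p>1$ whenever $w$ is not essentially constant. For the variation part, I would prove the one-dimensional inequality
\[
|Dw|\bigl((c,d)\bigr)+|w(c^+)-u_\alpha|+|u_\beta-w(d^-)|\geq|\bar w-u_\alpha|+|\bar w-u_\beta|.
\]
The idea is that, since $\bar w$ lies between the essential infimum and supremum of $w$, the BV structure in one dimension provides a point $y\in(c,d)$ where the one-sided approximate limits of $w$ straddle $\bar w$. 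Evaluating the pointwise variation on the chain $c^+,\,y^-,\,y^+,\,d^-$ (using that $\bar w$ lies between $w(y^-)$ and $w(y^+)$) gives $|Dw|\bigl((c,d)\bigr)\geq|w(c^+)-\bar w|+|\bar w-w(d^-)|$, after which two applications of the triangle inequality at $c$ and at $d$ close the estimate.

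Combining the two steps yields $H(\bar w)\leq H(w)$, strict in the $p>1$ setting as soon as $w$ is not essentially constant; since $u|_{(c,d)}$ realizes the minimum of $H$, it is forced to be constant. The main technical obstacle I foresee is the intermediate-value step used to locate the crossing point $y$: because $\bar w$ need not be attained by $w$, one must work with approximate limits and appeal to the one-dimensional BV structure. Everything else is careful book-keeping with Jensen and triangle inequalities; for $p=1$, where strict Jensen is not available, one has to complement the argument by exploiting the case of equality (namely that the variation inequality is strict unless $|Dw|\bigl((c,d)\bigr)=|w(d^-)-w(c^+)|$, reducing the problem to monotone profiles that can be handled directly).
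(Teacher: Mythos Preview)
Your proposal is correct and follows essentially the same route as the paper: replace $u$ on $(c,d)$ by its mean value, use Jensen's inequality for the fidelity term, and handle the total variation by an intermediate-value argument (the paper phrases this as a case analysis on the position of the mean relative to the boundary values $u(c)$, $u(d)$, locating a point where $u$ dips below or rises above the mean). Your explicit localization via the functional $H$ and your remark that the $p=1$ equality case needs separate treatment are slight refinements, but the core argument is identical.
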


\begin{proof}
Let $u\in BV\bigl((a,b)\bigr)$ and suppose it is a good representative
such that
\[
u(c)=\lim_{y\rightarrow c^-}u(y)\,,\quad\quad
   u(d)=\lim_{y\rightarrow d^+}u(y)\,.
\]
Define the function
\[
\widetilde{u}:=\left\{
\begin{array}{ll}
u & \text{in } (a,b)\meno (c,d)\,,\\
t & \text{in } (c,d)\,,
\end{array}
\right.
\]
where $t:=\fint_c^d u$.
We claim that
\[
\f(\widetilde{u})\leq\f(u)\,,
\]
where equality holds if and only if $u\equiv t$ in $(c,d)$.
We show that the above inequality holds separately for each term of the energy.
The fact that the fidelity term decreases is due to Jensen's inequality. Indeed, recalling that $f$ is constant on $(c,d)$,
say $f\equiv\bar{f}$ in $(c,d)$, we have that
\[
\Bigl|\, \fint_c^d u(y)\dy - \bar{f}   \,\Bigr|^p
   = \Bigl|\, \fint_c^d \bigl(u(y) - f\bigr) \dy  \,\Bigr|^p
   \leq\fint_c^d |u(y)-\bar{f}|^p\dy\,,
\]
and, integrating both sides on $(c,d)$, we obtain
\[
\int_c^d|t-\bar{f}|^p\dx\leq\int_c^d|u(x)-\bar{f}|^p\dx\,,
\]
where the equality case holds if and only if $u\equiv t$ in $(c,d)$.

We now consider the total variation term. We have that
\[
|D\widetilde{u}|([c,d])= |u(c)-t|+|u(d)-t|\,,
\]
Suppose, without loss of generality, that $u(c)\leq u(d)$.
We will consider three cases: $t\in[u(c),u(d)]$, $t\leq u(c)$ and $t\geq u(d)$.
In the first one, we simply notice that
\[
|D\widetilde{u}|([c,d]) = u(d)-u(c)\leq  |Du|([c,d])\,.
\]
If $t\leq u(c)$, then there exists $x\in[c,d)$ such that
$u(x)\leq t$. Thus,
\begin{align*}
|Du|([c,d])&\geq \bigl(u(c)-u(x)\bigr)+\bigl(u(d)-u(x)\bigr)\geq \bigl(u(c)-t\bigr)+\bigl(u(d)-t\bigr)\\
&=|D\widetilde{u}|([c,d])\,.
\end{align*}
The case $t\geq \max\{u(c), u(d)\}$ can be treated similarly.
This concludes the proof.
\end{proof}

The above result allows us to get the structure of minimizers of problem \eqref{eq:minpbg} in the case in which $f$ is a piecewise constant function.

\begin{corollary}\label{cor:structure}
Let $f$ be a piecewise constant function in $(a,b)$, \emph{i.e.},
\[
f(x)=\sum_{i=1}^k f_i\,\chi_{(x_{i-1},x_i)}(x)\,,\quad\quad f_i\in\R\,.
\]
Then any solution $u$ of the minimization problem \eqref{eq:minpbg} is of the form
\begin{equation}\label{eq:u}
u(x)=\sum_{i=1}^k u_i\,\chi_{(x_{i-1},x_i)}(x)\,,
\end{equation}
for some $(u_i)_{i=1}^k\subset\R\,$, not necessarily distinct from each other.

In particular, a function $u$ of the form \eqref{eq:u}
is a solution of \eqref{eq:minpbg} if and only
$\bar{u}:=(u_1,\dots,u_k)\in\R^k$ is a solution of the minimization problem
\begin{equation}\label{eq:ming}
\min_{v\in\R^k}G(v)\,,
\end{equation} 
where $G:\R^k\rightarrow\R$ is the function defined as
\begin{equation}\label{eq:g}
G(v):=\sum_{i=2}^k |v_i-v_{i-1}| + \lambda \sum_{i=1}^k L_i|f_i-v_i|^p\,,
\end{equation}
where $v=(v_1,\dots,v_k)$ and $L_i:=x_i-x_{i-1}$.
\end{corollary}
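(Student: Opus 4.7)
The plan is to deduce both parts of the corollary from the theorem just proved (that $u$ is constant on any interval where $f$ is constant) together with a direct computation of $\mathcal{G}$ on functions of the form \eqref{eq:u}.

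First, for the structural statement, I would apply the previous theorem to each of the intervals $(x_{i-1},x_i)$, $i=1,\dots,k$, on which the piecewise constant datum $f$ is constant (equal to $f_i$). The theorem then yields that any minimizer $u$ of \eqref{eq:minpbg} is constant on each $(x_{i-1},x_i)$; calling $u_i$ this constant value, one obtains the representation \eqref{eq:u}. No further argument is required here: the previous theorem does all the work, and one only needs to observe that the intervals $(x_{i-1},x_i)$ cover $(a,b)$ up to the finite set $\{x_1,\dots,x_{k-1}\}$, which has Lebesgue measure zero and so does not affect the $L^p$ fidelity term or the total variation.

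Next, for the equivalence with \eqref{eq:ming}, I would evaluate $\mathcal{G}(u)$ directly on a function $u$ of the form \eqref{eq:u}. For the fidelity term, since both $u$ and $f$ are constant on $(x_{i-1},x_i)$ with values $u_i$ and $f_i$ respectively, and $L_i=x_i-x_{i-1}$, one has
\[
\|u-f\|^p_{L^p(\o)}=\sum_{i=1}^k L_i|f_i-u_i|^p.
\]
For the total variation term, a function of the form \eqref{eq:u} has as distributional derivative the purely atomic measure $Du=\sum_{i=1}^{k-1}(u_{i+1}-u_i)\delta_{x_i}$ on $(a,b)$ (there is no contribution at $a$ and $b$, which lie outside $\o$), so
\[
|Du|(\o)=\sum_{i=2}^k|u_i-u_{i-1}|.
\]
Adding the two contributions gives exactly $\mathcal{G}(u)=G(\bar u)$ for $\bar u=(u_1,\dots,u_k)$.

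Combining these observations, a function $u$ of the form \eqref{eq:u} minimizes $\mathcal{G}$ among piecewise constant functions with jumps only at $x_1,\dots,x_{k-1}$ if and only if $\bar u$ minimizes $G$ on $\mathbb R^k$; by the first part, every $BV$ minimizer of \eqref{eq:minpbg} already has this structure, so the two minimization problems are equivalent. I do not anticipate a serious obstacle: the structural part is an immediate consequence of the previous theorem, and the variational equivalence is a bookkeeping computation.
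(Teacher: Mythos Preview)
Your proposal is correct and matches the paper's approach: the paper states the corollary without proof, treating it as an immediate consequence of the preceding theorem together with the obvious evaluation of $\mathcal{G}$ on step functions, which is exactly what you spell out.
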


Thus, we now concentrate on the study of the minimization problem \eqref{eq:ming}.\\
The cases $p=1$ and $p>1$ turn out to be quite different. Heuristically, the difference lies in the fact that, in the first case, the two terms of the energy are \emph{of the same order} while, for $p>1$, the fidelity term is of higher order than the total variation one. This leads to very different behavior of the solutions in the two cases.

One of the peculiar features of the case $p=1$ is the lack of uniqueness (see Proposition \ref{prop:p1}).
However, it is possible to identify a solution with a particular structure.

\begin{corollary}\label{cor:p1}
For $p=1$, there exists a solution $u$ of the problem \eqref{eq:ming} such that $u_i\in\{f_1,\dots,f_k\}$ for every $i=1,\dots,k$.
\end{corollary}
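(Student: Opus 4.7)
The plan is to produce the required minimizer as an extreme point of the entire set of minimizers. Let $M:=\arg\min_{v\in\R^k}G$. I would first check that $M$ is a nonempty, compact, convex subset of $\R^k$: convexity and closedness are immediate from continuity and convexity of $G$, nonemptiness follows from the direct method (the $L^1$ fidelity term is coercive since the $L_i$ are strictly positive), and boundedness follows from a standard truncation argument. Namely, if $v\in M$ and $v'$ denotes the coordinatewise truncation of $v$ to $[\min f,\max f]$, then the fidelity sum strictly decreases whenever $v'\neq v$, while the total variation contribution does not increase since truncation is $1$-Lipschitz; hence $M\subset[\min f,\max f]^k$. Being a nonempty compact convex subset of $\R^k$, $M$ has at least one extreme point $u^*$, and taking $u=u^*$ will give the claim once I show that every extreme point of $M$ has coordinates in $\{f_1,\ldots,f_k\}$.

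For this last step, fix an extreme point $u^*\in M$ and decompose $\{1,\ldots,k\}$ into the maximal intervals $B_1,\ldots,B_r$ on which $u^*$ is constant, denoting by $c_s$ its common value on $B_s=[i_s,j_s]$. Fix a block $B=[i,j]$ with value $c$, and test optimality against the perturbation $u^*+t\,e_B$, where $e_B:=\sum_{\ell=i}^{j}e_\ell$. By maximality of $B$ we have $c\neq u^*_{i-1}$ (if $i>1$) and $c\neq u^*_{j+1}$ (if $j<k$), so for $|t|$ small the two boundary total variation contributions $|c+t-u^*_{i-1}|$ and $|c+t-u^*_{j+1}|$ are affine in $t$, while all total variation and fidelity terms not involving indices in $B$ are independent of $t$. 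The remaining contribution $\lambda\sum_{\ell\in B}L_\ell|f_\ell-c-t|$ is then affine in $t$ on a neighborhood of $0$ precisely when $c\neq f_\ell$ for every $\ell\in B$. Supposing this, the entire map $t\mapsto G(u^*+t\,e_B)$ is affine near $0$, its slope must vanish by optimality of $u^*$, and hence $u^*\pm t\,e_B\in M$ for small $t>0$, contradicting extremality. Therefore $c=f_{\ell_0}$ for some $\ell_0\in B$, and since $B$ was arbitrary every coordinate of $u^*$ lies in $\{f_1,\ldots,f_k\}$.

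The main technical point is the book-keeping needed to verify that the perturbation in the block-indicator direction $e_B$ keeps $G$ on the same affine piece for small $|t|$; this reduces exactly to the maximality of $B$ (which prevents neighbouring blocks from merging under the perturbation) and to the working hypothesis $c\notin\{f_\ell:\ell\in B\}$ (which prevents the fidelity part from bending at $t=0$). Once these checks are in place, extremality of $u^*$ forces the bending $c=f_{\ell_0}$ to occur, and the proof is complete.
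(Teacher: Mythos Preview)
Your proof is correct and takes a genuinely different route from the paper's. The paper argues directly from the piecewise-affine structure of $G$: it decomposes $\R^k$ into finitely many polyhedral regions $\mathcal{A}^{t_1,t_2}_{s_1,s_2}$ according to the signs (and vanishing) of $u_i-u_{i-1}$ and $f_i-u_i$, observes that $G$ is affine on each region, and concludes that the minimum on each region is attained at a vertex, which by construction satisfies $u_i=f_{\sigma(i)}$ for some $\sigma$. Taking the overall minimum over regions then yields the claim.

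Your approach instead works with the set $M$ of all minimizers, shows it is a nonempty compact convex set, and picks an extreme point $u^*$. The block-perturbation argument then localizes the piecewise-linearity: if some maximal constant block of $u^*$ avoids all the $f_\ell$'s, you exhibit a nontrivial segment through $u^*$ inside $M$, contradicting extremality. This is slightly more elegant and yields a bit more: it shows that \emph{every} extreme point of $M$ has coordinates in $\{f_1,\dots,f_k\}$, not just that some minimizer does. The paper's decomposition, on the other hand, is closer to an algorithmic viewpoint (enumerate the affine pieces and their vertices) and makes the finite search space for minimizers explicit. Both arguments ultimately exploit the same fact---that $G$ is piecewise affine with break hyperplanes $\{v_i=v_{i-1}\}$ and $\{v_i=f_i\}$---but your extreme-point framing packages it more cleanly.
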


\begin{proof}
For any given quadruple of functions
\[
s_1:\{2,\dots,k\}\rightarrow\{0,1\}\,,\quad\quad s_2:\{1,\dots,k\}\rightarrow\{0,1\}\,,
\]
\[
t_1:\{2,\dots,k\}\rightarrow\{0,1\}\,,\quad\quad t_2:\{1,\dots,k\}\rightarrow\{0,1\}\,,
\]
let us consider the set $\mathcal{A}^{t_1,t_2}_{s_1,s_2}\subset\R^k$ such that
\begin{align}\label{eq:s1}
G(u)&=\sum_{i=2}^k (-1)^{s_1(i)}t_1(i)(u_i-u_{i-1})+\lambda \sum_{i=1}^k (-1)^{s_2(i)}t_2(i)L_i(f_i-u_i) \nonumber \\
&=v^{s_1,s_2,t_1,t_2}_\lambda\cdot u + c^{s_1,s_2,t_1,t_2}_\lambda\,,
\end{align}
for all $u\in\mathcal{A}^{t_1,t_2}_{s_1,s_2}$, where $c^{s_1,s_2,t_1,t_2}_\lambda\in\R$ and $v^{s_1,s_2,t_1,t_2}_\lambda\in\R^k$.
The result then follows by noticing that $G$ restricted to any $\mathcal{A}^{t_1,t_2}_{s_1,s_2}\subset\R^k$ is 
always minimized by a vector $u\in\R^k$ with
\[
u_i=f_{\sigma(i)}\,,
\]
for some function $\sigma:\{1,\dots,k\}\rightarrow\{1,\dots,k\}$ and that
\[
\min_{\R^k}G=\min_{s_1,s_2,t_1,t_2}\,\min_{\mathcal{A}^{t_1,t_2}_{s_1,s_2}}G_{|_{\mathcal{A}^{t_1,t_2}_{s_1,s_2}}}\,.
\]
\end{proof}

\begin{definition}
We will denote by $u^\lambda$ a solution of the minimization problem \eqref{eq:ming} corresponding to the value $\lambda$.
This will be \emph{the} solution, if $p>1$, while, for $p=1$, it will be understood as \emph{a} solution whose structure is those given by the previous result.
\end{definition}

\begin{remark}
It is easy to see that $u_i\in[\min f, \max f]$ for every solution $u$.
\end{remark}

In the rest of this section we seek to understand the behavior of the solution $u^\lambda$ in the limiting cases for $\lambda$,
\emph{i.e.}, when $\lambda\ll1$ and when $\lambda\gg1$.
In the first case the predominant term of the energy is given by the total variation, thus we expect $u^\lambda$ to minimizes it.

\begin{lemma}\label{lem:const}
Fix $p\geq 1$, positive numbers $(L_i)_{i=1}^k$ and two constants $m<M$.
Then, there exists a constant $\bar{\lambda}>0$, depending only on $p$, $(L_i)_{i=1}^k$, $m$ and $M$, with the following property.
For any piecewise constant function $f$ such that $f\in[m,M]$ and any
$\lambda\in(0,\bar{\lambda}]$, we have that $u^\lambda$ is constant.

In particular, if $p>1$ then there exists $c\in\R$ such that
$u^\lambda_i\equiv c$ for all $\lambda\in(0,\bar{\lambda}]$ and all $i=1,\dots,k$.
\end{lemma}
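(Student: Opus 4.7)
My plan is to argue by contradiction via a direct one-parameter perturbation that flattens the maximum plateau of a hypothetical non-constant minimizer. Assume that for some $\lambda$ the minimizer $u^\lambda$ is not constant. By the preceding remark $u^\lambda\in[m,M]^k$, so $M^\ast:=\max_i u^\lambda_i$ is a well-defined element of $[m,M]$ and the plateau set $I:=\{i:u^\lambda_i=M^\ast\}$ is a nonempty proper subset of $\{1,\dots,k\}$. For sufficiently small $\delta>0$, I would consider the competitor $\tilde u$ defined by $\tilde u_i:=M^\ast-\delta$ for $i\in I$ and $\tilde u_i:=u^\lambda_i$ otherwise.

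The two terms of $G$ are then estimated separately. For the total-variation part, at each index $2\le i\le k$ for which exactly one of $i-1,i$ lies in $I$ the jump $|u^\lambda_i-u^\lambda_{i-1}|$ decreases by exactly $\delta$ (once $\delta<\min_{j\notin I}(M^\ast-u^\lambda_j)$), while every other jump is unchanged. Since $I$ is a nonempty proper subset of $\{1,\dots,k\}$, at least one such boundary index exists, and therefore the total variation of $\tilde u$ is at most that of $u^\lambda$ minus $\delta$. For the fidelity part, the mean-value theorem applied to $t\mapsto|f_i-t|^p$ on $[M^\ast-\delta,M^\ast]$, together with $f_i,M^\ast\in[m,M]$, gives
\[
|f_i-M^\ast+\delta|^p-|f_i-M^\ast|^p\le p(M-m+\delta)^{p-1}\delta,
\]
an estimate which also holds for $p=1$ via $\bigl||x+\delta|-|x|\bigr|\le\delta$. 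Summing over $i\in I$ yields a total fidelity increase of at most $\lambda p(M-m+\delta)^{p-1}(b-a)\delta$.

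Combining the two estimates,
\[
G(\tilde u)-G(u^\lambda)\le\delta\bigl(-1+\lambda p(M-m+\delta)^{p-1}(b-a)\bigr).
\]
I would then set $\bar\lambda:=\tfrac{1}{2p(M-m)^{p-1}(b-a)}$ (with the convention $(M-m)^0=1$ when $p=1$, and $b-a=\sum_iL_i$; the case $M=m$ is trivial since then $f$ is constant). For every $\lambda\in(0,\bar\lambda]$ and every sufficiently small $\delta>0$ the right-hand side is strictly negative, contradicting the minimality of $u^\lambda$. By construction, $\bar\lambda$ depends only on $p$, $(L_i)$, $m$ and $M$, as required.

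For the \emph{In particular} claim with $p>1$, the map $c\mapsto\sum_iL_i|f_i-c|^p$ is strictly convex and hence has a unique minimizer $c^\ast\in\R$; since every constant has zero total variation, the unique constant minimizer of $G$ is identically equal to $c^\ast$, and this value does not depend on $\lambda$. The main technical point is the boundary count that delivers the strict TV decrease by $\delta$; once that is in place, the argument reduces to a quantitative first-order balance between the gain in TV and the loss in the fidelity term.
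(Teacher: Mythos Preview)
Your proof is correct and rests on the same core idea as the paper's: perturb an extremal plateau of a hypothetical non-constant minimizer (you lower the maximum plateau, the paper raises the minimum one) and balance the guaranteed drop of at least $\delta$ in the total variation against the controlled change in the fidelity term. The technical packaging differs only slightly---you bound the fidelity change for finite $\delta$ via the mean-value theorem, the paper passes to the one-sided derivative at $\delta=0$---and these are equivalent. The one substantive difference is that you treat all $p\geq1$ with the same argument, whereas the paper handles $p=1$ by a separate comparison argument whose threshold $\bar\lambda=\min_i|f_i-f_{i-1}|/\bigl(k(\max_iL_i)(M-m)\bigr)$ depends on $f$; your uniform treatment yields a $\bar\lambda$ depending only on $p,(L_i),m,M$, which is precisely what the lemma asserts, so in that respect your argument is a slight improvement.
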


\begin{proof}
We first treat the case $p>1$. Assume that $u^\lambda$ is not constant and let $i\in\{1,\dots,k\}$ be such that $u^\lambda_i=\min\{u_j^\lambda\,:\,j=1,\dots,k\}$. Let
\[
r:=\inf\{j\leq i \,:\, u_s=u_i \text{ for all }  j\leq s\leq i\}\,,
\]
\[
t:=\sup\{j\geq i \,:\, u_s=u_i \text{ for all }  i\leq s\leq j\}\,.
\]
By hypothesis, either $r>1$ or $t<k$.
Consider, for $\e>0$, the  vector $u^\e\in\R^k$ defined as
$u^\e_j:=u_j+\e$ for $j=r,\dots,t$ and
$u^\e_j:=u^\lambda_j$ for all the other $j$'s.
Then, recalling that $u_j\in[m,M]$ for all $j=1,\dots,k$, we have that
\begin{align}\label{eq:der}
\lim_{\e\rightarrow0^+}\frac{G(u^\e)-G(u^\lambda)}{\e}&=a+p\lambda(-1)^{s_i}L_i|u_i-f_i|^{p-1} \nonumber\\
&\leq a+p\lambda (M-m)^{p-1}\max_{i=1,\dots,k} L_i\,,
\end{align}
where $a\in\{-1,-2\}$ (in particular, $a=-1$ if $r=1$ or $t=k$ and $a=-2$ otherwise), and $s_i\in\{0,1\}$. Let
\[
\bar{\lambda}:=\frac{1}{p(M-m)^{p-1}\max_i L_i}\,.
\]
If $\lambda<\bar{\lambda}$, from \eqref{eq:der} we get that
$G(u^\e)<G(u^\lambda)$. This means that $u^\lambda$ has to be constant for $\lambda<\bar{\lambda}$.
Moreover, it is easy to see that the function $G$ restricted to the set $\{ (u_1,\dots,u_k)\in\R^k \,:\, u_1=\dots=u_k \}$ admits a unique minimizer, that is independent of $\lambda$.

We now have to prove that $u^{\bar{\lambda}}$ is constant. Assume that $u^\lambda_i\equiv c$ for for all $\lambda\in(0,\bar{\lambda})$ and all $i=1,\dots,k$. Let $\bar{c}\in\R^k$ be the vector given by $\bar{c}_i:=c$.
Then $G_\lambda(c)<G_\lambda(v)$ for all $v\in\R^k$ with $v\neq\bar{c}$ and all $\lambda\in(0,\bar{\lambda})$,
where the subscript $\lambda$ is to underline the dependence of $G$ on $\lambda$.
By letting $\lambda\nearrow\bar{\lambda}$, we get $G_{\bar{\lambda}}(c)<G_{\bar{\lambda}}(v)$ for all $v\in\R^k$ and thus $u^{\bar{\lambda}}=\bar{c}$.

Let us now treat the case $p=1$. Suppose that $u^\lambda$ is not constant. Recalling that $u^\lambda_i\in\{f_1,\dots,f_k\}$, we have that
\[
|D u^\lambda|(\o)\geq \min_i|f_i-f_{i-1}|\,.
\]
On the other hand, for any function $v$ such that $v\equiv c\in[\min f,\max f]$ in $(a,b)$, it holds that
\[
G(v)\leq\lambda k (\max_i L_i)(M-m)\,.
\]
Set
\[
\bar{\lambda}:=\frac{\min_i|f_i-f_{i-1}|}{k (\max_i L_i)(M-m)}\,.
\]
For $\lambda<\bar{\lambda}$ the above estimates show that $u^\lambda$ must be constant.

Finally, in order to prove that also $u^{\bar{\lambda}}$ is constant, we reason as follows: we know that
$u^\lambda=\bar{c}^\lambda$ for $\lambda\in(0,\bar{\lambda})$, for some $\bar{c}_\lambda=(c_\lambda,\dots,c_\lambda)\in\R^k$.
Take $\lambda_n\nearrow\bar{\lambda}$. Since $c_{\lambda_n}\in[\min f,\max f]$, up to a not relabelled subsequence we have that $c_{\lambda_n}\rightarrow c$. We conclude that
\[
G_{\bar{\lambda}}\bigl((c,\dots,c)\bigr)\leq G_{\bar{\lambda}}(v)
\]
for all $v\in\R^k$.
\end{proof}

We now consider the case $\lambda\gg1$. Since
\[
\lambda L_i |u^\lambda_i-f_i|^p\leq G(u^\lambda)\leq G(f)<\infty\,,
\]
we know that
\begin{equation}\label{eq:conv}
u^\lambda\rightarrow f\quad\quad \text{ as } \lambda\rightarrow\infty\,.
\end{equation}
The following results underline another important difference between the cases $p=1$ and $p>1$. Indeed, if $p=1$ the limit \eqref{eq:conv} is reached for $\lambda<\infty$, while if $p>1$ only asymptotically.

\begin{lemma}\label{lem:minmax}
Let $p>1$ and assume that $f$ is not constant. Then $u^\lambda\in(\min f,\max f)$ for all
$\lambda>0$. In particular, $f$ can never be a solution of the minimization problem \eqref{eq:ming}.
\end{lemma}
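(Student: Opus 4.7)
The plan is to argue by contradiction, assuming $u^\lambda_i = \max f =: M$ for some index $i$, and to construct a downward perturbation of $u^\lambda$ on a maximal constancy run that strictly decreases $G$; the case $u^\lambda_i = \min f =: m$ is handled symmetrically (or by replacing $u, f$ with $-u, -f$). The starting point is the remark immediately preceding the statement: by a truncation argument, one already knows $u^\lambda_i \in [m,M]$ for every $i$, so the only thing to rule out is equality at an endpoint.

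First I would fix the structure of the perturbation. Assuming $u^\lambda_i = M$ for some $i$, let $[r,t] \subset \{1,\dots,k\}$ be the maximal block of consecutive indices on which $u^\lambda \equiv M$ (so that $u^\lambda_{r-1}<M$ if $r>1$ and $u^\lambda_{t+1}<M$ if $t<k$). For small $\e>0$, set $u^\e_j := M-\e$ on $[r,t]$ and $u^\e_j := u^\lambda_j$ elsewhere. A direct check shows that the total variation term changes by exactly $-a\e$ where $a := \mathbf{1}_{\{r>1\}} + \mathbf{1}_{\{t<k\}} \in \{0,1,2\}$, because the internal jumps inside $[r,t]$ remain zero and each boundary jump drops by $\e$ (for $\e$ small enough that $u^\e_{r-1}, u^\e_{t+1}$ still lie below $M-\e$).

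Next I would expand the fidelity term. For $j \in [r,t]$ with $f_j < M$ one has $|f_j - u^\e_j|^p = (M-\e-f_j)^p$, whose right derivative at $\e=0$ is $-p(M-f_j)^{p-1} < 0$; for $j \in [r,t]$ with $f_j = M$ one has $|f_j - u^\e_j|^p = \e^p$, which is $o(\e)$ precisely because $p>1$. Summing, the right derivative of $\e \mapsto G(u^\e)$ at $0$ equals
\[
-a - p\lambda \sum_{\substack{j \in [r,t] \\ f_j < M}} L_j (M - f_j)^{p-1}.
\]
This is strictly negative unless simultaneously $a=0$ (i.e.\ $r=1$ and $t=k$) and $f_j = M$ for all $j \in [1,k]$; but these two conditions together force $f \equiv M$, contradicting the hypothesis that $f$ is not constant. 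Hence for sufficiently small $\e$ we have $G(u^\e) < G(u^\lambda)$, contradicting the minimality of $u^\lambda$.

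The main technical point — and the reason the statement genuinely requires $p>1$ — is the $o(\e)$ bound on the fidelity cost at indices where $f_j = M$: when $p=1$ this cost would be $\e$, of the same order as the TV gain, and the whole argument collapses (consistently with the dichotomy highlighted in the paper). Once $u^\lambda_i < M$ and (symmetrically) $u^\lambda_i > m$ are established for all $i$, the final claim is immediate: since $f$ attains the value $M$ somewhere, $f$ itself cannot satisfy $f_i \in (m,M)$ for every $i$, and therefore $f$ is never a minimizer of \eqref{eq:ming}.
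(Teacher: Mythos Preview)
Your proof is correct and follows essentially the same approach as the paper: identify a maximal run of indices where $u^\lambda$ attains the extreme value, perturb uniformly on that run, and compute the one-sided derivative of $G$ to reach a contradiction with minimality. Your version is in fact slightly more careful than the paper's, since you explicitly allow the case $a=0$ (the paper asserts $a\in\{-1,-2\}$ without justification) and you isolate the indices with $f_j=M$ to make transparent where the hypothesis $p>1$ enters via the $o(\e)$ bound.
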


\begin{proof}
We first prove that $u^\lambda$ cannot achieve the value $\min f$.
Assume that $u^\lambda_i=\min f$ for some $i\in\{1,\dots,k\}$. Let $r\leq i\leq s$ be such that $u_j=u_i$ for all $j=r,\dots,s$.
Consider, for $\e>0$, the vector $u^\e\in\R^k$ given by $u^\e_j:=u^\lambda_j+\e$ for $j=r,\dots,s$ and
$u^\e_j:=u^\lambda_j$ for all other $j$'s.
Then
\[
\lim_{\e\rightarrow0^+}\frac{G(u^\e)-G(u)}{\e}=
     a-p\lambda \sum_{j=r}^s L_j(f_j-u^\lambda_i)^{p-1}<0\,,
\]
where $a\in\{-1,-2\}$.
This is in contradiction with the minimality of $u^\lambda$.

With a similar argument it is possible to show that $u$ cannot achieve $\max f$.
\end{proof}

\begin{lemma}\label{lem:p1f}
Let $p=1$. Then there exists $\bar{\lambda}>0$ such that for all
$\lambda\geq\bar{\lambda}$ the solution of the minimization problem \eqref{eq:ming} is unique and is given by $f$ itself.
\end{lemma}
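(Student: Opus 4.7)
The plan is to compare $G(v)$ directly with $G(f)$ and show that the difference is strictly positive for every $v \neq f$, provided $\lambda$ is chosen large enough. The key observation is that for $p=1$ the total variation term can be controlled pointwise against the fidelity term, since both are of the same homogeneity in $v-f$, whereas the coefficient of the fidelity scales linearly in $\lambda$.

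The main estimate comes from the reverse triangle inequality. For any $v \in \R^k$ and each $i \in \{2,\dots,k\}$, I would write
\[
|v_i - v_{i-1}| \;\geq\; |f_i - f_{i-1}| - \bigl|(v_i - f_i) - (v_{i-1} - f_{i-1})\bigr| \;\geq\; |f_i - f_{i-1}| - |v_i - f_i| - |v_{i-1} - f_{i-1}|.
\]
Summing over $i = 2,\dots,k$ and noting that each $|v_j - f_j|$ appears at most twice, this gives
\[
\sum_{i=2}^{k} |v_i - v_{i-1}| \;\geq\; \sum_{i=2}^{k} |f_i - f_{i-1}| - 2 \sum_{i=1}^{k} |v_i - f_i|.
\]
Substituting this into $G(v)$ and subtracting $G(f) = \sum_{i=2}^{k}|f_i - f_{i-1}|$ yields
\[
G(v) - G(f) \;\geq\; \sum_{i=1}^{k} (\lambda L_i - 2) |v_i - f_i|.
\]

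To conclude, I would set
\[
\bar{\lambda} := \frac{2}{\min_{i} L_i},
\]
so that for every $\lambda > \bar{\lambda}$ each coefficient $\lambda L_i - 2$ is strictly positive. Hence $G(v) - G(f) \geq 0$ for all $v$, with equality if and only if $|v_i - f_i| = 0$ for every $i$, that is $v = f$. This simultaneously proves that $f$ is a minimizer and that it is the unique one. A small cosmetic step is to replace the strict threshold $\bar\lambda$ by any slightly larger constant to accommodate the ``$\lambda \geq \bar\lambda$'' phrasing in the statement; this is routine.

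I do not expect a serious obstacle here: the only subtle point is carefully accounting for the multiplicities when summing the triangle-inequality bounds, but the endpoint terms $|v_1 - f_1|$ and $|v_k - f_k|$ appear only once each while the interior ones appear twice, so the crude bound by $2\sum_i|v_i - f_i|$ goes through and gives exactly the estimate needed.
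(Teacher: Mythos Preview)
Your argument is correct and, in fact, cleaner than the paper's own proof. The paper argues by contradiction: it invokes Corollary~\ref{cor:p1} to restrict attention to candidate minimizers $u^\lambda$ with $u^\lambda_i\in\{f_1,\dots,f_k\}$, so that whenever $u^\lambda_i\neq f_i$ the fidelity term $\lambda L_i|u^\lambda_i-f_i|$ alone already exceeds $G(f)$ once $\lambda$ is large. Your approach avoids this structural input entirely: the reverse triangle inequality gives the pointwise bound $G(v)-G(f)\ge\sum_i(\lambda L_i-2)|v_i-f_i|$ directly for \emph{every} $v\in\R^k$, which simultaneously yields minimality and uniqueness of $f$ for $\lambda>2/\min_i L_i$. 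The trade-off is that the paper's route gives a threshold expressed in terms of the data $|f_i-f_{i-1}|$ and $G(f)$, while yours gives the explicit and data-independent bound $2/\min_i L_i$; in exchange, your proof is self-contained and makes the uniqueness claim transparent, whereas the paper's argument as written only immediately rules out competitors of the special form from Corollary~\ref{cor:p1}.
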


\begin{proof}
Suppose that there exists a sequence $\lambda_j\rightarrow\infty$ for which $u^{\lambda_j}_i\neq f_i$
for all $j$'s (this is possible, since $k$ is finite).
By recalling that $u^{\lambda_j}_i\in\{f_1,\dots,f_k\}$, setting
\[
\bar{\lambda}:=\frac{G(f)}{\min_i L_i\,\min_i|f_i-f_{i-1}|}\,,
\]
we have, for $\lambda_j>\bar{\lambda}$, that
\[
G(u^{\lambda_j})\geq \lambda_j L_i|u_i^{\lambda_j}-f_i|>G(f)\,,
\]
contradicting the minimality of $u^{\lambda_j}$.
\end{proof}


\section{Explicit solutions in a simple case}

Here we study the case where $k=2$. This analysis, albeit its simplicity, is important to underline some features that distinguish the behavior of the solution of the minimization problem \eqref{eq:minpbg} in the cases $p=1$ and $p>1$.

\begin{proposition}\label{prop:p1}
Let $f_1<f_2$. Then the solutions $u^\lambda$ of the minimization problem \eqref{eq:ming} in the case $p=1$ are the following:
\begin{itemize}
\item if $L_1>L_2$, set $\lambda^1_T:=\frac{1}{L_2}$. Then
\[
\left\{
\begin{array}{lll}
u^\lambda_1=u^\lambda_2=f_1 & & \text{ for } \lambda<\lambda^1_T\,,\\
&\\
u^\lambda_1=f_1, u^\lambda_2\in[f_1,f_2] & & \text{ for } \lambda=\lambda^1_T\,,\\
&\\
u^\lambda_1=f_1, u^\lambda_2=f_2 & & \text{ for } \lambda>\lambda^1_T\,,\\
\end{array}
\right.
\]
\item if $L_1=L_2$, set $\lambda^1_T:=\frac{1}{L_1}$. Then
\[
\left\{
\begin{array}{lll}
u^\lambda_1\in[f_1,f_2], u^\lambda_2\geq u_1 & & \text{ for } \lambda\leq\lambda^1_T\,,\\
&\\
u^\lambda_1=f_1, u^\lambda_2=f_2 & & \text{ for } \lambda>\lambda^1_T\,,\\
\end{array}
\right.
\]
\item if $L_1<L_2$, set $\lambda^1_T:=\frac{1}{L_1}$. Then
\[
\left\{
\begin{array}{lll}
u^\lambda_1=u^\lambda_2=f_2 & & \text{ for } \lambda<\lambda^1_T\,,\\
&\\
u^\lambda_1\in[f_1,f_2], u^\lambda_2=f_2 & & \text{ for } \lambda=\lambda^1_T\,,\\
&\\
u^\lambda_1=f_1, u^\lambda_2=f_2 & & \text{ for } \lambda>\lambda^1_T\,,\\
\end{array}
\right.
\]
\end{itemize}
\end{proposition}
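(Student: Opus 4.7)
Since $k=2$ and $p=1$, the function to minimize is
\[
G(v_1,v_2)=|v_2-v_1|+\lambda L_1|f_1-v_1|+\lambda L_2|f_2-v_2|,
\]
which is piecewise linear. The strategy is to partition the plane into regions on which all three absolute values have a fixed sign, reduce the search to a convex polygon where $G$ is linear, and then evaluate $G$ at the vertices.

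First I would use the remark following Corollary~3.4 to restrict attention to $(v_1,v_2)\in[f_1,f_2]^2$. Next I would split this square into the two triangles $R_+:=\{f_1\leq v_1\leq v_2\leq f_2\}$ and $R_-:=\{f_1\leq v_2\leq v_1\leq f_2\}$. On $R_-$ the absolute values resolve to give
\[
G(v_1,v_2)=v_1(1+\lambda L_1)-v_2(1+\lambda L_2)-\lambda L_1 f_1+\lambda L_2 f_2,
\]
whose gradient has positive first component and negative second component; since on $R_-$ we have $v_1\geq v_2$, $G$ is non-increasing along the direction $(-1,1)$ inside $R_-$, so its minimum over $R_-$ is attained on the diagonal $\{v_1=v_2\}=R_+\cap R_-$. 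Hence it is enough to minimize $G$ over $R_+$.

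On $R_+$ the energy becomes the affine function
\[
G(v_1,v_2)=(\lambda L_1-1)v_1+(1-\lambda L_2)v_2-\lambda L_1 f_1+\lambda L_2 f_2,
\]
so its minimum over the triangle $R_+$ is achieved at one of the three vertices $(f_1,f_1)$, $(f_2,f_2)$, $(f_1,f_2)$. A direct computation yields
\[
G(f_1,f_1)=\lambda L_2(f_2-f_1),\qquad G(f_2,f_2)=\lambda L_1(f_2-f_1),\qquad G(f_1,f_2)=f_2-f_1.
\]
Thus $\min_{R_+}G=(f_2-f_1)\min\{\lambda L_1,\lambda L_2,1\}$, and comparing these three quantities gives the three cases of the statement: if $L_1>L_2$ then $\lambda L_2<\lambda L_1$ and the critical threshold at which $\lambda L_2=1$ is $\lambda_T^1=1/L_2$; the case $L_1<L_2$ is symmetric with $\lambda_T^1=1/L_1$; when $L_1=L_2=L$ the two vertices $(f_1,f_1)$ and $(f_2,f_2)$ are tied and the transition occurs at $\lambda_T^1=1/L$.

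The only subtlety, and the place where I would be most careful, is the description of the full set of minimizers in the equality cases. For $L_1\ne L_2$ at $\lambda=\lambda_T^1$, two of the three vertex values coincide and, by linearity, the whole edge joining them is optimal, which yields the interval of admissible $u^\lambda_2$ (resp.\ $u^\lambda_1$) described in the first and third cases. In the degenerate case $L_1=L_2=L$ at $\lambda=\lambda_T^1=1/L$ both affine coefficients on $R_+$ vanish, so $G\equiv f_2-f_1$ on the whole triangle $R_+$, giving the two-parameter family $\{u_1\in[f_1,f_2],\,u_2\geq u_1\}$; for $\lambda<1/L$ the coefficient $1-\lambda L>0$ forces $v_2=v_1$, yielding the one-parameter family of constant minimizers contained in that description.
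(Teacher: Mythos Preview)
Your argument is correct and follows essentially the same route as the paper's proof. Both reduce to the triangle $\mathcal{T}=R_+=\{f_1\le v_1\le v_2\le f_2\}$, observe that $G$ is affine there with coefficient vector $(\lambda L_1-1,\,1-\lambda L_2)$, and read off the minimizers from the signs of these coefficients; you add two details the paper leaves implicit---an explicit check that $R_-$ cannot do better, and the evaluation of $G$ at the three vertices---but these are minor elaborations rather than a different approach.
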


\begin{proof}
It is easy to see that we must have $f_1\leq u_1\leq u_2\leq f_2$.
Thus, we consider the region
\begin{equation}\label{eq:t}
\mathcal{T}:=\{\, (u_1, u_2)\in\R^2 \;:\; f_1\leq u_1\leq u_2\leq f_2 \,\}\,,
\end{equation}
and we rewrite the function $G$ in $\mathcal{T}$ as
\[
G(\bar{u})=[\lambda L_1-1]u_1 + [1-\lambda L_2]u_2+\lambda[f_2 L_2-f_1 L_1] =v_\lambda\cdot u+c_\lambda\,.
\]
When minimizing $G$ in $\mathcal{T}$, we can drop the term $c_\lambda$.
Then, the minimizers, according to the position of the vector $\frac{v_\lambda}{|v_\lambda|}$ (well defined for all $\lambda$'s, except in the case $L_1=L_2$ and $\lambda=\frac{1}{L_1}$), are the following:

\begin{figure}[H]
\includegraphics[scale=0.8]{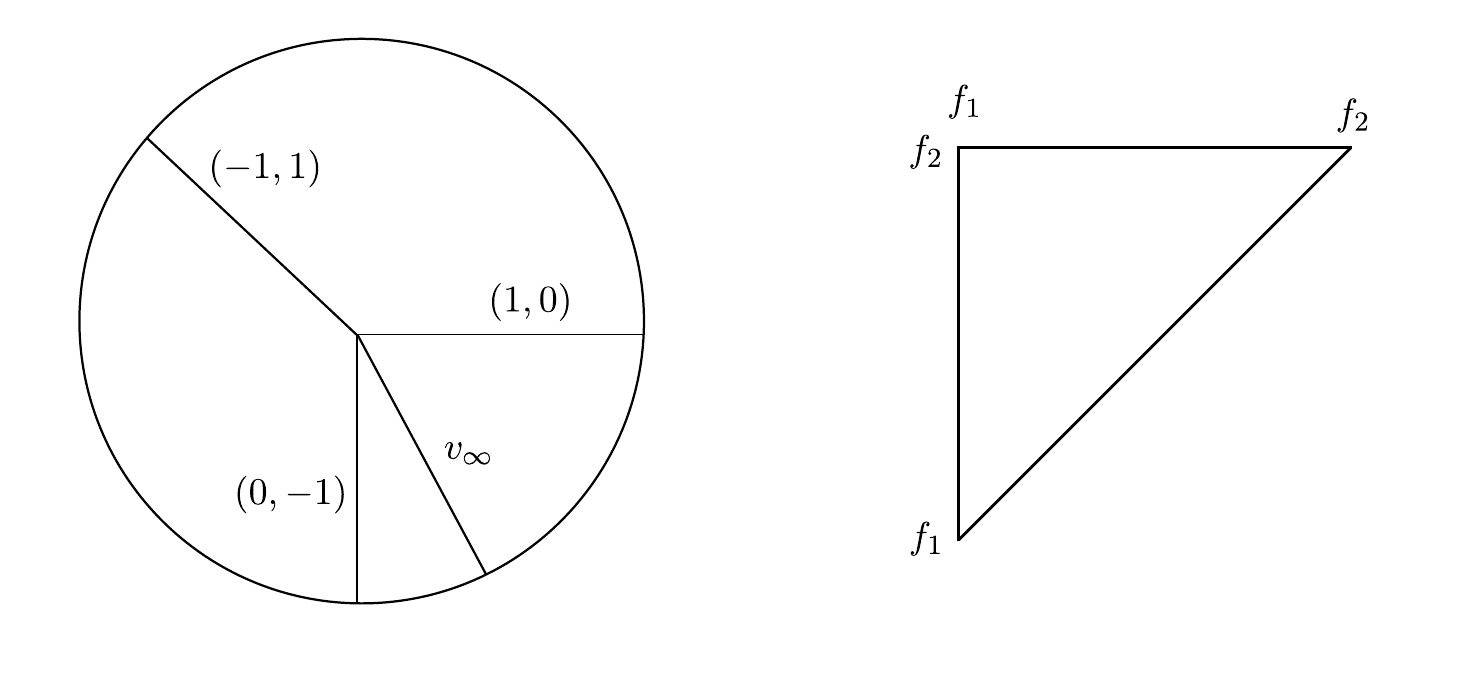}
\caption{On the left it is displayed where the (renormalized) vector $v_\lambda$ can vary: from $v_1$ for $\lambda=0$ up tp (asymptotically) $v_\infty:=\arctan\frac{L_2}{L_1}$. On the left the triangle where the vector $u$ can vary.}
\end{figure}

Thus, by simply studying the sign of the components of $v_\lambda$, we obtain the desired result.
Notice that the non uniqueness happens only when the vector $v_\lambda$ is orthogonal to $\{x=y\}\subset\R^2$.
\end{proof}

In the case $p>1$ the landscape of the solutions is quite different.

\begin{proposition}
Let $f_1<f_2$ and let $p>1$. Define
\[
\lambda^p_T:=\frac{1}{p}\frac{(L_1^{\frac{1}{p-1}}+L_2^{\frac{1}{p-1}})^{p-1}}{L_1 L_2(f_2-f_1)^{p-1}}\,.
\]
The solution $u^\lambda$ of the minimization problem \eqref{eq:ming} is the following:
\begin{itemize}
\item for $\lambda\leq \lambda_T^p$
\begin{equation}\label{eq:solp1}
u^\lambda_1=u^\lambda_2=\frac{L_1^{\frac{1}{p-1}}}{L_1^{\frac{1}{p-1}}+L_2^{\frac{1}{p-1}}}\,f_1 + \frac{L_2^{\frac{1}{p-1}}}{L_1^{\frac{1}{p-1}}+L_2^{\frac{1}{p-1}}}\,f_2\,,
\end{equation}
\item for $\lambda>\lambda_T^p$
\begin{equation}\label{eq:solp2}
u^\lambda_1= f_1+\frac{1}{(p\lambda L_1)^{\frac{1}{p-1}}}\,,\quad\quad
u^\lambda_2= f_2-\frac{1}{(p\lambda L_2)^{\frac{1}{p-1}}}\,.
\end{equation}
\end{itemize}
\end{proposition}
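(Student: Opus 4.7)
The plan is to exploit strict convexity of $G$ and analyze the two possible regimes for the minimizer via the Euler--Lagrange equation and, at the non-differentiable locus $\{v_1 = v_2\}$, via the subdifferential. Since $p>1$, each fidelity term $v_i \mapsto \lambda L_i|f_i - v_i|^p$ is strictly convex in $v_i$, so $G$ is strictly convex on $\R^2$ and admits a unique minimizer $u^\lambda=(u_1^\lambda,u_2^\lambda)$. By Lemma \ref{lem:minmax} one has $f_1 < u_i^\lambda < f_2$ for $i=1,2$, so on a neighborhood of $u^\lambda$ the absolute values in the fidelity term disappear and the only possible source of non-smoothness of $G$ at $u^\lambda$ is the total-variation term $|v_2-v_1|$ when $u_1^\lambda = u_2^\lambda$. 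A short preliminary observation rules out $u_1^\lambda > u_2^\lambda$: on the open set $\{f_1 < u_2 < u_1 < f_2\}$ one has $\partial_{u_1}G = 1 + p\lambda L_1(u_1-f_1)^{p-1} > 0$ and $\partial_{u_2}G = -1 - p\lambda L_2(f_2-u_2)^{p-1} < 0$, excluding critical points there.

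I then split the analysis into two cases. If $u_1^\lambda < u_2^\lambda$, then $G$ is $C^1$ at $u^\lambda$ and the system $\nabla G(u^\lambda)=0$ reads
\begin{equation*}
-1 + p\lambda L_1(u_1-f_1)^{p-1} = 0, \qquad 1 - p\lambda L_2(f_2-u_2)^{p-1} = 0,
\end{equation*}
whose unique solution is precisely \eqref{eq:solp2}. The consistency requirement $u_1^\lambda < u_2^\lambda$ becomes, after rearrangement,
\begin{equation*}
(p\lambda)^{-1/(p-1)}\bigl(L_1^{-1/(p-1)} + L_2^{-1/(p-1)}\bigr) < f_2-f_1,
\end{equation*}
which, upon raising to the power $p-1$ and collecting, is equivalent to $\lambda > \lambda_T^p$.

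In the remaining case $u_1^\lambda = u_2^\lambda = c$, optimality is characterized by $0 \in \partial G(c,c)$. Since the subdifferential of $(v_1,v_2) \mapsto |v_2 - v_1|$ at a diagonal point equals $\{(-s,s): s \in [-1,1]\}$, this amounts to the existence of $s \in [-1,1]$ such that
\begin{equation*}
p\lambda L_1(c-f_1)^{p-1} = s \quad \text{and} \quad p\lambda L_2(f_2-c)^{p-1} = s.
\end{equation*}
Eliminating $s$ gives $L_1^{1/(p-1)}(c-f_1) = L_2^{1/(p-1)}(f_2-c)$, hence the value of $c$ stated in \eqref{eq:solp1}. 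Plugging this $c$ back in, one computes $s = p\lambda L_1 L_2 (f_2-f_1)^{p-1}\bigl(L_1^{1/(p-1)}+L_2^{1/(p-1)}\bigr)^{-(p-1)}$, and the admissibility $s \in [0,1]$ reduces precisely to $\lambda \leq \lambda_T^p$.

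By uniqueness of the minimizer, exactly one of the two cases occurs for each $\lambda$, and they cover $\lambda > \lambda_T^p$ and $\lambda \leq \lambda_T^p$ respectively (both formulas agree at $\lambda = \lambda_T^p$ by construction, consistently with the continuity of $\lambda \mapsto u^\lambda$). There is no real conceptual obstacle; the only slightly delicate step is the algebraic bookkeeping with the exponents $1/(p-1)$ needed to identify the threshold with the stated $\lambda_T^p$.
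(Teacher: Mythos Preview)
Your proof is correct. The interior case ($u_1^\lambda<u_2^\lambda$) is handled exactly as in the paper, via the Euler--Lagrange equations and the algebraic identification of the threshold $\lambda_T^p$. The difference lies in the treatment of the diagonal case $u_1^\lambda=u_2^\lambda$: the paper deliberately avoids any direct computation on $\partial\mathcal{T}$ and instead invokes the continuity of $\lambda\mapsto u^\lambda$ (Lemma~\ref{lem:prop1}, proved in the next section) to conclude that for $\lambda\leq\lambda_T^p$ the minimizer must be the limit of the interior solutions as $\lambda\searrow\lambda_T^p$, namely the constant $\bar u$. You instead characterize the diagonal minimizer directly through the subdifferential inclusion $0\in\partial G(c,c)$, which yields both the value of $c$ and the admissibility range $\lambda\leq\lambda_T^p$ in one stroke. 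Your route is more self-contained (no forward reference to Lemma~\ref{lem:prop1}) and arguably more elementary; the paper's route, on the other hand, is meant to foreshadow the general continuity-based strategy of Sections~5--6, where subdifferential computations in $\R^k$ would be considerably less pleasant.
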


\begin{proof}
Recalling that $f_1\leq u_1\leq u_2\leq f_2$, we just have to consider the region $\mathcal{T}$ defined in \eqref{eq:t} and to rewrite the function $G$ in that region as
\[
G(u_1,u_2):=u_2-u_1+\lambda L_1(u_1-f_1)^p+\lambda L_2 (f_2-u_2)^p\,.
\]
The critical point of $G$ is given by
\[
u_1= f_1+\frac{1}{(p\lambda L_1)^{\frac{1}{p-1}}}\,,\quad\quad
u_2= f_2-\frac{1}{(p\lambda L_2)^{\frac{1}{p-1}}}\,,
\]
and it belongs to the interior of $\mathcal{T}$, \emph{i.e.}, $u_1^\lambda< u_2^\lambda$, only for
$\lambda>\lambda^p_T$. Since $G$ is strictly convex, this critical value turns out to be the global minimizer of $G$ for $\lambda>\lambda_T^p$. In the case $\lambda\leq\lambda_T^p$, the point of minimum has to be on $\partial\mathcal{T}$.
Instead of performing all the computations for finding the minimum point in all of the three edges of $\partial\mathcal{T}$ and to compare them, we will use the following argument based on the continuity of the minimizer $u^\lambda$ with respect to $\lambda$ (see Lemma \ref{lem:prop1}), \emph{i.e.}, we invoke the fact that the function $\lambda\mapsto u^\lambda$ is continuous.
Notice that for $\lambda\searrow\lambda_T^p$ we have
\[
u_\lambda\rightarrow (\bar{u},\bar{u})\,,
\]
where
\[
\bar{u}:=\frac{L_1^{\frac{1}{p-1}}}{L_1^{\frac{1}{p-1}}+L_2^{\frac{1}{p-1}}}\,f_1 + \frac{L_2^{\frac{1}{p-1}}}{L_1^{\frac{1}{p-1}}+L_2^{\frac{1}{p-1}}}\,f_2\,,
\]
is independent of $\lambda$. By using the continuity of the solution, we can conclude that,
for $\lambda\leq\lambda_T^p$, the solution of the minimization problem is given by $(\bar{u},\bar{u})$.
\end{proof}

\begin{remark}
We remark a couple of facts:
\begin{enumerate}
\item we have that $\lambda_T^p\rightarrow\lambda_T^1$ as $p\rightarrow1^+$ (in each of the cases for the definition of the second one). Indeed, suppose that $L_1<L_2$. Then,
\begin{align*}
\lim_{p\rightarrow1^+}\lambda_T^p &=
	\lim_{p\rightarrow1^+}\frac{(L_1^{\frac{1}{p-1}}+L_2^{\frac{1}{p-1}})^{p-1}}{L_1 L_2}\\
&= \frac{1}{L_1}\lim_{p\rightarrow1^+}\biggl(\, 1+\biggl(\frac{L_1}{L_2}\biggr)^{\frac{1}{p-1}} \,\biggr)^{p-1}\\
&= \frac{1}{L_1}\lim_{t\rightarrow0^+}exp\left[\, t\log\left[\left(\frac{L_1}{L_2}\right)^{\frac{1}{t}}  +1\right] \,\right]=\frac{1}{L_2}=\lambda_T^1\,.
\end{align*}
Similar reasonings lead to the claimed result in the other two cases.
In particular, notice that $\lambda_T^p>\lambda_T^1$.\\

\item The solutions that converge to \emph{a} solution for $p=1$, as $p\searrow1$.
Indeed, suppose $\lambda>\lambda_T^1$, Then for $p$ sufficiently close to $1$, from the above bullet point, we have that 
$\lambda>\lambda_T^p$. Thus, the solution of the minimization problem for $p$ is given by \eqref{eq:solp2}.
In this case, it is easy to see that the solution converges to $(f_1,f_2)$, as $p\searrow1$.
In the case $\lambda<\lambda_T^1$, we can assume as above that $p$ is so close to $1$ that the solution of the minimization problem for $p$ is given by \eqref{eq:solp1}.

If $L_1>L_2$, then
\[
\frac{L_1^{\frac{1}{p-1}}}{L_1^{\frac{1}{p-1}}+L_2^{\frac{1}{p-1}}}=\frac{1}{\Bigl( \frac{L_2}{L_1} \Bigr)^{\frac{1}{p-1}}+1}\rightarrow1\,,\quad\quad\text{ as } p\rightarrow1\,,
\]
\[
\frac{L_2^{\frac{1}{p-1}}}{L_1^{\frac{1}{p-1}}+L_2^{\frac{1}{p-1}}}=\frac{1}{\Bigl( \frac{L_1}{L_2} \Bigr)^{\frac{1}{p-1}}+1}\rightarrow0\,,\quad\quad\text{ as } p\rightarrow1\,.
\]
In the case $L_1=L_2$, both coefficients are equal to $\frac{1}{2}$.

Finally, in the case $\lambda=\lambda_T^1$, since $\lambda_T^p>\lambda_T^1$ we have that the solution of the minimization problem is given by \eqref{eq:solp1}. The result follows by arguing as before.
\end{enumerate}
\end{remark}

\begin{remark}
We expect a similar behavior for the minimization problem \eqref{eq:minpbg} in the case $p=1$ to hold also for general piecewise constant initial data $f$. In particular, we believe that the non uniqueness of the solution happens only for a finite number of critical values of $\lambda$, where a continuum of solutions is present. This set of critical values will be the set whose elements are $\lim_{p\rightarrow 1^+}\lambda^p_i$, where the $\lambda^p_i$ is the biggest value of $\lambda$ for which the solution $u$ corresponding to the parameters $\lambda$ and $p$ happens to have $u_i=u_{i+1}$ (see Theorem \ref{thm:split}).
\end{remark}


\section{The behavior of the solution for $p>1$}

This section contains the main result of this paper, namely Theorem \ref{thm:split}, that is derived from the qualitative properties of the solutions proved in the following two lemmas and in Proposition \ref{prop:mono}.

We start by proving the continuity of the solution $u^\lambda$ with respect to $\lambda$.

\begin{lemma}\label{lem:prop1}
Let $p>1$. Then
$\lambda\mapsto u^\lambda$ is continuous and $\lim_{\lambda\rightarrow\infty}u^\lambda=f$.
\end{lemma}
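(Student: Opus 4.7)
The plan is to prove continuity by the standard stability argument for minimizers of a strictly convex functional depending continuously on a parameter, and to extract the limit $u^\lambda \to f$ directly from the a priori bound already displayed before equation \eqref{eq:conv}.

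First I would observe that for $p>1$ the functional $G_\lambda$ on $\R^k$ is strictly convex: each component map $v_i \mapsto L_i |f_i - v_i|^p$ is strictly convex because $t \mapsto |t|^p$ is, and adding the convex total variation term $\sum_{i=2}^{k} |v_i - v_{i-1}|$ preserves strict convexity. Hence $u^\lambda$ is unique for every $\lambda>0$, so talking about \emph{the} map $\lambda \mapsto u^\lambda$ is meaningful.

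The limit $u^\lambda \to f$ as $\lambda \to \infty$ is essentially already in the excerpt: plugging $v=f$ into $G_\lambda$ gives $G_\lambda(f) = \sum_{i=2}^k|f_i-f_{i-1}|$, a constant independent of $\lambda$, and minimality yields the bound
\[
\lambda L_i |u^\lambda_i - f_i|^p \;\le\; G_\lambda(u^\lambda) \;\le\; G_\lambda(f),
\]
so $|u^\lambda_i - f_i|^p \le C/\lambda$ for each $i$, which is the claim.

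For continuity, I would fix $\lambda_0>0$ and take any sequence $\lambda_n \to \lambda_0$. Since $u^{\lambda_n} \in [\min f, \max f]^k$, the sequence is bounded, and by the usual subsequence principle it suffices to show that every subsequential limit $w$ satisfies $w = u^{\lambda_0}$. Given such a subsequence with $u^{\lambda_{n_j}} \to w$, I would start from the minimality inequality $G_{\lambda_{n_j}}(u^{\lambda_{n_j}}) \le G_{\lambda_{n_j}}(v)$ for arbitrary $v \in \R^k$ and pass to the limit in $j$. Because $(\lambda,v) \mapsto G_\lambda(v)$ is jointly continuous on $(0,\infty)\times\R^k$, the inequality passes to the limit and gives $G_{\lambda_0}(w) \le G_{\lambda_0}(v)$ for every $v$. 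Uniqueness of the minimizer then forces $w = u^{\lambda_0}$, which closes the continuity argument.

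Since the problem is finite-dimensional and the functional is a continuous, strictly convex, coercive function of $(\lambda, v)$ whose sublevel sets are compact, there is no real technical obstacle; the only point to be careful about is making the convergence of $G_{\lambda_n}$ to $G_{\lambda_0}$ uniform on the relevant compact set $[\min f, \max f]^k$, so that the passage to the limit in the minimality inequality is justified — which is immediate since $\lambda \mapsto G_\lambda(v)$ is affine in $\lambda$ with coefficients bounded on that set.
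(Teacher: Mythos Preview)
Your proposal is correct and follows essentially the same route as the paper. The continuity argument is identical: boundedness of the minimizers in $[\min f,\max f]^k$, compactness, passage to the limit in the minimality inequality via joint continuity of $G$ in $(\lambda,v)$, and uniqueness from strict convexity; for the limit $u^\lambda\to f$ you use the direct bound $\lambda L_i|u_i^\lambda-f_i|^p\le G_\lambda(f)$, which the paper itself displays just before \eqref{eq:conv}, whereas the formal proof in the paper rephrases this same estimate as a contradiction argument.
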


\begin{proof}
Fix $\bar{\lambda}>0$ and let $\lambda_n\rightarrow\lambda$.
Then $G(u^{\lambda_n})\leq G(v)$ for all $v\in\R^k$, where equality holds if and only if $v=u^{\lambda_n}$.
Since $|u^{\lambda_n}|\leq \sqrt{k}|\max_i f_i|$, up to a (not relabeled) subsequence, we have that
$u^{\lambda_n}\rightarrow\bar{v}$. Using the continuity of $G$ in both $v$ and $\lambda$,
we have that $G(\bar{v})\leq G(v)$ for all $v\in\R^k$. By the uniqueness of the solution, we deduce that
$\bar{v}=u^{\lambda}$, and that $u^{\lambda_n}\rightarrow u^\lambda$ for \emph{all} sequences $\lambda_n\rightarrow\lambda$.

To prove the second part of the lemma, we reason as follows. Assume that $u^\lambda$ does not converge to $f$ as $\lambda\rightarrow\infty$. Since $u_i\in[\min f,\max f]$, by compactness (up to a not relabelled subsequence) $u^\lambda\rightarrow v$, for some $v\neq f$. In particular, there exists an index $i$ such that $|u^\lambda_i-f_i|>\varepsilon$
for $\lambda\gg1$, for some $\varepsilon>0$. So that
\[
+\infty>G(f)\geq G(u^\lambda)\geq \lambda |u^\lambda_i-f_i|\rightarrow\infty\,,
\]
as $\lambda\rightarrow\infty$. This is the desired contradiction.
\end{proof}

We now prove several qualitative properties regarding the behavior of the solution $u^\lambda$ as $\lambda$ varies.
Some of the following results could be stated in a more inclusive way, but since they can be used to deduce qualitative properties of the solutions when no direct analysis can be performed, for clarity of exposition we opt to present each of them separately.

\begin{lemma}\label{lem:prop2}
Let $p>1$. Then, the following properties hold true:
\begin{itemize}
\item[(i)] Assume that, for $\lambda\in(\lambda_1,\lambda_2)$, there exists a function $\lambda\mapsto\bar{u}^\lambda$ such that, for some $r\geq0$,
\[
\left\{
\begin{array}{c}
u^\lambda_i=u^\lambda_{i+1}=\dots=u^\lambda_{i+r}=\bar{u}^\lambda\,,
 \\
 \\
u^\lambda_{i-1}<\bar{u}<u^\lambda_{i+r+1} \quad \text{ or } \quad u^\lambda_{i-1}>\bar{u}>u^\lambda_{i+r+1}\,.
\end{array}
\right.
\]

\begin{figure}[H]
\includegraphics[scale=1]{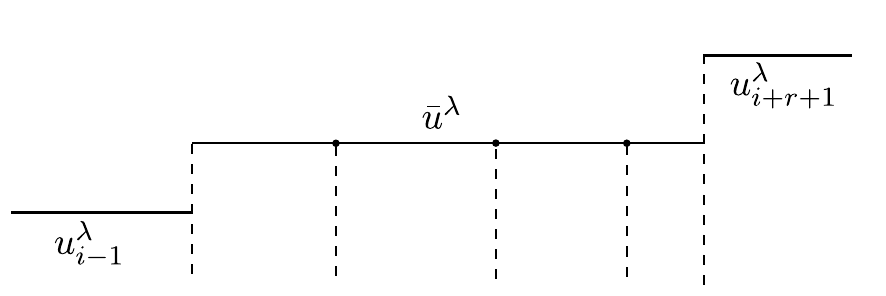}
\end{figure}

Then $\bar{u}^\lambda$ is the solution of
\[
\min_{c\in(u^\lambda_{i-1}, u^\lambda_{i+r+1})} \sum_{j=i}^{i+r} L_j|c-f_j|^p\,.
\]
In particular, $\bar{u}^\lambda$ is constant in $(\lambda_1,\lambda_2)$.\\

\item[(ii)] Assume that, for $\lambda\in(\lambda_1,\lambda_2)$, there exists a function $\lambda\mapsto\bar{u}^\lambda$ such that, for some $r\geq0$,
\[
\left\{
\begin{array}{c}
u^\lambda_i=u^\lambda_{i+1}=\dots=u^\lambda_{i+r}=\bar{u}^\lambda\,,\\
\\
u^\lambda_{i-1}\,,\, u^\lambda_{i+r+1}<\bar{u}^\lambda\,.
\end{array}
\right.
\]

\begin{figure}[H]
\includegraphics[scale=1]{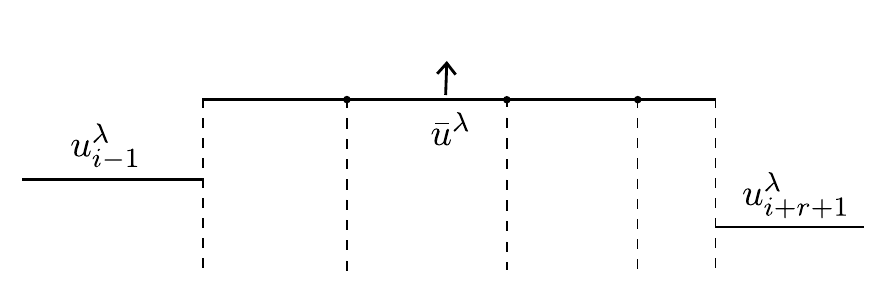}
\end{figure}

Then $\lambda\mapsto\bar{u}^\lambda$ is increasing.

In particular, in the case $r=0$, we have
\[
u^\lambda_i=f_i-\Bigl( \frac{2}{p\lambda L_i} \Bigr)^{\frac{1}{p-1}}\,.
\]
\medskip

\item[(iii)] Assume that, for $\lambda\in(\lambda_1,\lambda_2)$, there exists a function $\lambda\mapsto\bar{u}^\lambda$ such that, for some $r\geq0$,
\[
\left\{
\begin{array}{c}
u^\lambda_i=u^\lambda_{i+1}=\dots=u^\lambda_{i+r}=\bar{u}^\lambda\,,
\\
\\
u^\lambda_{i-1}\,,\, u^\lambda_{i+r+1}>\bar{u}^\lambda\,.
\end{array}
\right.
\]

\begin{figure}[H]
\includegraphics[scale=1]{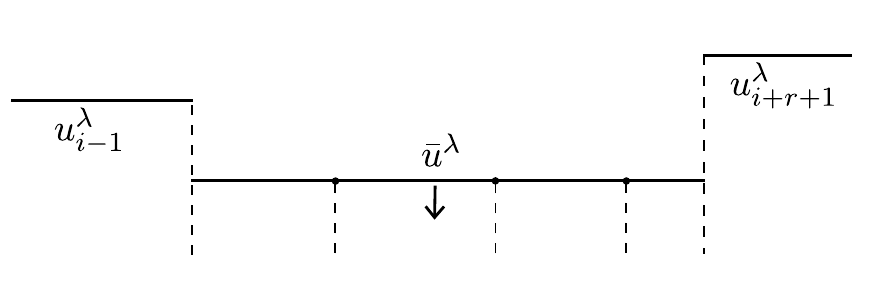}
\end{figure}

Then $\lambda\mapsto\bar{u}^\lambda$ is decreasing.

In particular, in the case $r=0$, we have
\[
u^\lambda_i=f_i+\Bigl( \frac{2}{p\lambda L_i} \Bigr)^{\frac{1}{p-1}}\,.
\]
\medskip

\item[(iv)] Assume that, for $\lambda\in(\lambda_1,\lambda_2)$, there exists a function $\lambda\mapsto\bar{u}^\lambda$ such that, for some $r\geq0$,
\[
\left\{
\begin{array}{c}
u^\lambda_1=u^\lambda_2=\dots=u^\lambda_{r}=\bar{u}^\lambda\,,\\
\\
u^\lambda_{r+1}<\bar{u}^\lambda\,.
\end{array}
\right.
\]

\begin{figure}[H]
\includegraphics[scale=1]{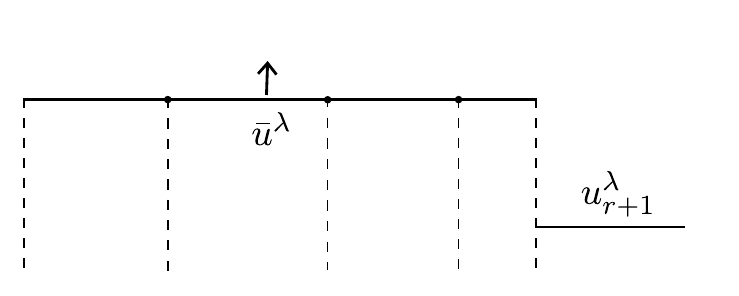}
\end{figure}

Then $\lambda\mapsto\bar{u}^\lambda$ is increasing.

In particular, in the case $r=0$, we have
\[
u^\lambda_i=f_1-\Bigl( \frac{1}{p\lambda L_1} \Bigr)^{\frac{1}{p-1}}\,.
\]
\medskip

\item[(v)] Assume that, for $\lambda\in(\lambda_1,\lambda_2)$, there exists a function $\lambda\mapsto\bar{u}^\lambda$ such that, for some $r\geq0$,
\[
\left\{
\begin{array}{c}
u^\lambda_1=u^\lambda_{i+1}=\dots=u^\lambda_{r}=\bar{u}^\lambda\,,\\
\\
u^\lambda_{r+1}>\bar{u}^\lambda\,.
\end{array}
\right.
\]

\begin{figure}[H]
\includegraphics[scale=1]{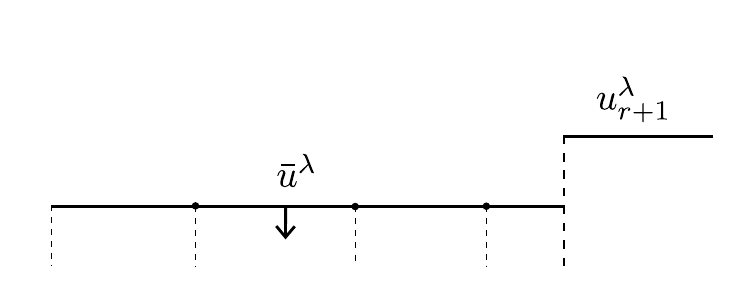}
\end{figure}

Then $\lambda\mapsto\bar{u}^\lambda$ is decreasing.

In particular, in the case $r=0$, we have
\[
u^\lambda_i=f_1+\Bigl( \frac{1}{p\lambda L_1} \Bigr)^{\frac{1}{p-1}}\,.
\]
\medskip

\item[(vi)] Assume that, for $\lambda\in(\lambda_1,\lambda_2)$, there exists a function $\lambda\mapsto\bar{u}^\lambda$ such that, for some $r\geq0$,
\[
\left\{
\begin{array}{c}
u^\lambda_{k-r}=\dots=u^\lambda_k=\bar{u}^\lambda\,,\\
\\
u^\lambda_{k-r-1}>\bar{u}^\lambda\,.
\end{array}
\right.
\]

\begin{figure}[H]
\includegraphics[scale=1]{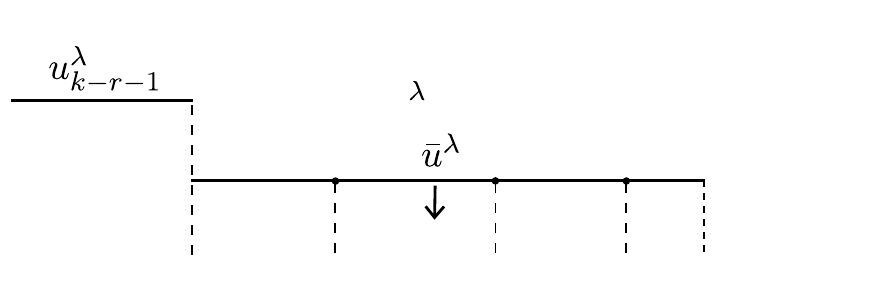}
\end{figure}

Then $\lambda\mapsto\bar{u}^\lambda$ is decreasing.

In particular, in the case $r=0$, we have
\[
u^\lambda_k=f_k+\Bigl( \frac{1}{p\lambda L_k} \Bigr)^{\frac{1}{p-1}}\,.
\]
\medskip

\item[(vii)] Assume that, for $\lambda\in(\lambda_1,\lambda_2)$, there exists a function $\lambda\mapsto\bar{u}^\lambda$ such that, for some $r\geq0$,
\[
\left\{
\begin{array}{c}
u^\lambda_{k-r}=\dots=u^\lambda_k=\bar{u}^\lambda\,,\\
\\
u^\lambda_{k-r-1}<\bar{u}^\lambda\,.
\end{array}
\right.
\]

\begin{figure}[H]
\includegraphics[scale=1]{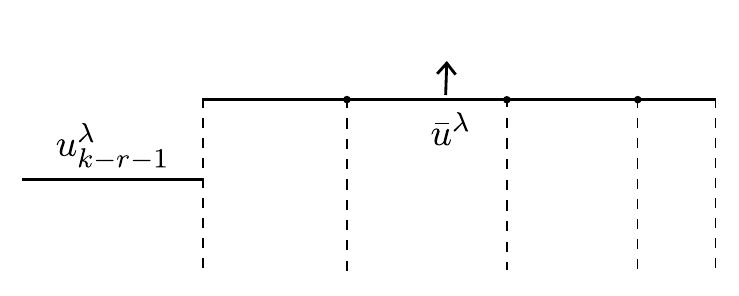}
\end{figure}

Then $\lambda\mapsto\bar{u}^\lambda$ is increasing.

In particular, in the case $r=0$, we have
\[
u^\lambda_k=f_k-\Bigl( \frac{1}{p\lambda L_k} \Bigr)^{\frac{1}{p-1}}\,.
\]
\end{itemize}
\end{lemma}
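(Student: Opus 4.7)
The plan is to treat all seven parts by a single reduction that exploits the strict inequalities on the neighbors of the plateau to localize the optimality condition to a one-variable strictly convex problem in the common value $c = \bar{u}^\lambda$. Fix $\lambda \in (\lambda_1,\lambda_2)$, set $c^\star := \bar{u}^\lambda$, and let $v(c) \in \R^k$ be the vector obtained from $u^\lambda$ by replacing the block $u^\lambda_i = \dots = u^\lambda_{i+r}$ by a common value $c$ and leaving all other coordinates unchanged. For $c$ in a small neighborhood of $c^\star$, the strict inequalities on the neighbors preserve the sign of $c - u^\lambda_{i-1}$ and of $c - u^\lambda_{i+r+1}$ (with the obvious modification when the plateau touches an endpoint $1$ or $k$). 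Hence the total variation part of $G(v(c))$ depends on $c$ only through $\varepsilon\, c$ for an integer $\varepsilon \in \{-2,-1,0,1,2\}$ determined by the case, while the fidelity part is $\lambda\,\Psi(c)$ with $\Psi(c) := \sum_{j=i}^{i+r} L_j|c-f_j|^p$ strictly convex, since $p > 1$. By minimality of $u^\lambda$, the value $c^\star$ minimizes the strictly convex function $\varepsilon c + \lambda\,\Psi(c)$, so the first-order condition
\[
\varepsilon + p\lambda\sum_{j=i}^{i+r} L_j|c^\star - f_j|^{p-1}\mathrm{sign}(c^\star - f_j) \;=\; 0
\]
is both necessary and sufficient.

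In case (i) the neighbors straddle the plateau, so the two endpoint jumps contribute terms of opposite sign and $\varepsilon = 0$. The first-order condition collapses to the $\lambda$-free equation $\Psi'(c^\star) = 0$, whose unique solution (by strict convexity of $\Psi$) is precisely the minimizer of $\Psi$ over the open admissible interval; this proves simultaneously the variational characterization of $\bar u^\lambda$ and its independence of $\lambda$ on $(\lambda_1,\lambda_2)$.

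For the remaining cases one reads off $\varepsilon = +2$ in (ii), $-2$ in (iii) (two-sided local extremum), $+1$ in (iv) and (vii), $-1$ in (v) and (vi) (one-sided, boundary, extremum). Setting $\Phi(c) := \sum_j L_j|c-f_j|^{p-1}\mathrm{sign}(c-f_j)$, the first-order condition reads $p\lambda\,\Phi(c^\star) = -\varepsilon$. Since $\Phi'(c) = (p-1)\sum_j L_j|c-f_j|^{p-2} > 0$, the implicit function theorem, combined with the identity $\Phi(c^\star) = -\varepsilon/(p\lambda)$, yields
\[
\frac{d\bar u^\lambda}{d\lambda} \;=\; -\frac{\Phi(c^\star)}{\lambda\,\Phi'(c^\star)} \;=\; \frac{\varepsilon}{p\lambda^2\,\Phi'(c^\star)},
\]
whose sign is the sign of $\varepsilon$, giving the asserted monotonicity in each case. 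The explicit $r = 0$ formulas are obtained by solving the first-order condition with a single term: the equation forces the sign of $c^\star - f_j$, and a single radical extraction produces precisely the expressions stated in the lemma.

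The main subtlety is in justifying that the first-order condition fully characterizes $c^\star$. This is the purpose of the strict inequalities in the hypotheses: they ensure that on a whole open interval around $c^\star$ the restricted energy $c \mapsto \varepsilon c + \lambda\,\Psi(c)$ is exactly the expression written above (no kink from the absolute values in the total variation appears), so its strict convexity rules out the possibility of a non-critical local minimizer. Outside this interval the restricted energy may change form, but any competitor there is separated from $c^\star$ by a positive amount fixed by the strict inequalities, so it cannot undermine local optimality.
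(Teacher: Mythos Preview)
Your proof is correct and follows the same route as the paper---restrict $G$ to the one-parameter family $v(c)$, linearize the total-variation contribution using the strict neighbor inequalities, and read off the monotonicity from the first-order condition---though your unification via the single integer $\varepsilon\in\{-2,-1,0,1,2\}$ is cleaner than the paper's case-by-case treatment of (i) and (ii) with the rest left to the reader. One small caveat: for $1<p<2$ the function $\Phi$ is not differentiable at points $c=f_j$, so the implicit function theorem does not apply there; however, since $\Phi$ is continuous and strictly increasing, the identity $c^\star=\Phi^{-1}\bigl(-\varepsilon/(p\lambda)\bigr)$ already delivers the asserted monotonicity without any differentiation, and the conclusion stands.
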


\begin{proof}
We start by proving property (i). Suppose that $u^\lambda_{i-1}<\bar{u}^\lambda<u^\lambda_{i+r+1}$. In the other case we argue in a similar way.
By hypothesis, the vector $u^\lambda$ minimizes the function $G$ in the set
\[
\{\, (u_1,\dots,u_k)\in\R^k : u_{i-1}<u_i=\dots=u_{i+r}<u_{i+r+1} \,\}\,,
\]
and in this set, the function $G$ can be written as
\[
G(u)=\widetilde{G}(u_1\dots,u_{i-1},u_{i+r+1},\dots,u_k)\\
+\lambda\sum_{j=i}^{i+r} L_j|\bar{u}-f_j|^p\,.
\]
By keeping $u_1,\dots,u_{i-1}$ and $u_{i+r+1},\dots,u_k$ fixed, the claim follows by minimizing the above quantity with respect to $\bar{u}$.\\

Since all the other properties can be proved with an argument whose general lines are similar, we just prove property (ii), leaving the details of the others proofs to the reader.

In the hypothesis of (ii), it holds that $u^\lambda$ is a minimizer of $G$ in the set
\[
\{\, (u_1,\dots,u_k)\in\R^k : u_{i-1},\, u_{i+r+1}<u_i=\dots=u_{i+r} \,\}\,.
\]
Restricted to this set, the function $G$ can be written as
\[
G(u)=\widetilde{G}(u_1\dots,u_{i-1},u_{i+r+1},\dots,u_k)\\
+2\bar{u}+\lambda\sum_{j=i}^{i+r} L_j|\bar{u}-f_j|^p\,.
\]
So, for $\lambda\in(\lambda_1,\lambda_2)$ and $u_1\dots,u_{i-1},u_{i+r+1},\dots,u_k$ fixed, $\bar{u}^\lambda$ is the minimizer of the strictly convex function 
\[
H(c):=2c+\lambda\sum_{j=i}^{i+r} L_j|c-f_j|^p
\]
in the set $(\max\{{u^\lambda_i}, u^\lambda_{i+r}\}, \max f)$.

To study the minimizer of $H$, we can assume without loss of generality that $f_i<f_{i+1}<\dots<f_{i+r}$.
Indeed, we notice that the order of the $f_j$'s doesn't matter. Moreover, in the case in which $f_p=f_q$ for some $p\neq q$, we can simply collect the two terms in a single one and use $L_p+L_q$ as a corresponding factor in the above summation.
We now want to prove that $\lambda\mapsto \bar{u}$ is decreasing.
Note that the function $H$ can be written as
\[
H(c)=2c+\lambda\sum_{j=i}^{m}L_j(c-f_j)^p + \lambda\sum_{j=m+1}^{i+r}L_j(f_j-c)^p=:H_m(c)\,,
\]
if $c\in(f_m,f_{m+1}]$, for some $m\in\{i,\dots,i+r-1\}$, and
\[
H(c)=2c+\lambda\sum_{j=i}^{i+r}L_j(c-f_j)^p\,,
\]
if $c\in[f_{i+r},\max f)$.
Consider the function $H_m$ in the interval $(f_m,f_{m+1})$. We have that
\[
H_m'(c)=2+p\lambda\left[\, \sum_{j=i}^{m}L_j(c-f_j)^{p-1} - \sum_{j=m+1}^{i+r}L_j(f_j-c)^{p-1} \,\right]\,.
\]
Here $H'_m(c)=0$ has a solution only if the term in the parenthesis is negative and if so, the let $\lambda\mapsto c^\lambda$ be such a solution. It is easy to see that this function is regular in
$(f_m,f_{m+1})$. By differentiating the expression $H'_m(c^\lambda)$ with respect to $\lambda$, we obtain
\begin{align*}
& p\left[\, \sum_{j=i}^{m}L_j(c-f_j)^{p-1} - \sum_{j=m+1}^{i+r}L_j(f_j-c)^{p-1} \,\right] \\
& + \lambda\frac{\dd c^\lambda}{\dd\lambda}p(p-1)\left[\, \sum_{j=i}^{m}L_j(c-f_j)^{p-2} + \sum_{j=m+1}^{i+r}L_j(f_j-c)^{p-2} \,\right]=0\,.
\end{align*}
Thus, by recalling that the term in the first parenthesis is negative, we get $\frac{\dd c^\lambda}{\dd\lambda}<0$, as desired.

In the case in which the minimizer of the function $H$ is reached at a point $c=f_{m+1}$, we simply consider the function
$H_m$ and we apply the argument above.

Finally, the same reasoning applies when $c\in[f_{i+r},\max f)$.
\end{proof}

We are now in position to prove the fundamental result we will use to develop our strategy for finding the solution.

\begin{theorem}\label{thm:split}
For each $i=1,\dots,k-1$ there exists $\lambda_i\in(0,\infty)$ such that $u_i^\lambda=u_{i+1}^\lambda$ for $\lambda\leq \lambda_i$, while $u_i^\lambda\neq u^\lambda_{i+1}$ for $\lambda>\lambda_i$.
\end{theorem}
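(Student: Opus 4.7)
The plan is to define
\[
\lambda_i := \sup\{\,\lambda>0 \,:\, u^\lambda_i = u^\lambda_{i+1}\,\}
\]
and to establish, in turn, that (a) $\lambda_i\in(0,\infty)$, (b) the supremum is attained, and (c) the equality $u^\lambda_i = u^\lambda_{i+1}$ persists for every $\lambda\leq \lambda_i$.

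For (a) and (b), strict positivity follows from Lemma \ref{lem:const}: there is a threshold $\bar\lambda>0$ below which $u^\lambda$ is constant, so in particular $u^\lambda_i = u^\lambda_{i+1}$ throughout $(0,\bar\lambda]$. Finiteness uses the standing (implicit) irredundancy of the piecewise constant representation of $f$, namely $f_i\neq f_{i+1}$: were $f_i=f_{i+1}$, the first theorem of Section 3 would give $u^\lambda_i = u^\lambda_{i+1}$ for \emph{all} $\lambda$, and the statement would be vacuous at index $i$ after merging the corresponding intervals. Combined with the convergence $u^\lambda\to f$ from Lemma \ref{lem:prop1}, irredundancy forces $u^\lambda_i\neq u^\lambda_{i+1}$ for all $\lambda$ sufficiently large, so $\lambda_i<\infty$. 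Continuity of $\lambda\mapsto u^\lambda$, also from Lemma \ref{lem:prop1}, makes the set $\Lambda_i := \{\lambda>0:u^\lambda_i=u^\lambda_{i+1}\}$ closed in $(0,\infty)$, hence $\lambda_i\in \Lambda_i$.

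The technical core is (c). I would argue by contradiction: suppose some $\mu^*\in(0,\lambda_i)$ satisfies $u^{\mu^*}_i \neq u^{\mu^*}_{i+1}$, and, after exchanging $i$ and $i+1$ in the argument if needed, assume $u^{\mu^*}_i < u^{\mu^*}_{i+1}$. By continuity, the connected component $(\alpha,\beta)$ of the open set $\{\lambda>0:u^\lambda_i<u^\lambda_{i+1}\}$ containing $\mu^*$ satisfies $0<\alpha<\mu^*<\beta\leq \lambda_i <\infty$ (the bounds on $\alpha$ and $\beta$ coming from Lemma \ref{lem:const} and the definition of $\lambda_i$ respectively), and continuity pins down $u^\alpha_i = u^\alpha_{i+1}$ and $u^\beta_i = u^\beta_{i+1}$. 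On $(\alpha,\beta)$, index $i$ is the right endpoint of its plateau $[p(\mu),i]$ and index $i+1$ is the left endpoint of its plateau $[i+1,q(\mu)]$. Depending on whether $p(\mu)=1$ or not and on the sign of $u^\mu_{p(\mu)-1}-u^\mu_i$, and symmetrically on the right side, one of the configurations (i)--(vii) of Lemma \ref{lem:prop2} applies on each maximal sub-interval of $(\alpha,\beta)$ on which the block structure around $i,i+1$ is constant, pinning the monotonicity of $\mu\mapsto u^\mu_i$ and $\mu\mapsto u^\mu_{i+1}$. Chaining these monotonicities across the finitely many sub-intervals, together with the boundary pinching $u^\alpha_i=u^\alpha_{i+1}$ and $u^\beta_i=u^\beta_{i+1}$, yields a contradiction with the fact that $u^\mu_i<u^\mu_{i+1}$ strictly on $(\alpha,\beta)$.

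The main obstacle is this case analysis: the block structure of $u^\mu$ around indices $i$ and $i+1$ may change finitely many times across $(\alpha,\beta)$, and the selection of the applicable case of Lemma \ref{lem:prop2} is sensitive to these changes. The cleanest way to resolve it is likely to focus on a small sub-interval just to the left of $\beta$ where the block structure is stable: there, the applicable case of Lemma \ref{lem:prop2} gives one-sided monotone behaviour of both $\mu\mapsto u^\mu_i$ and $\mu\mapsto u^\mu_{i+1}$, and the resulting trajectories cannot simultaneously reach equality at $\mu=\beta$ while staying strictly separated for $\mu<\beta$. Executing this dichotomy in every subconfiguration of Lemma \ref{lem:prop2} is the principal bookkeeping burden.
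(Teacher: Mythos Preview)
Your proposal is correct and follows essentially the same route as the paper. The paper organizes it slightly differently---it first proves the implication ``$u^{\widetilde\lambda}_i=u^{\widetilde\lambda}_{i+1}$ $\Rightarrow$ $u^\lambda_i=u^\lambda_{i+1}$ for all $\lambda\le\widetilde\lambda$'' and only afterwards defines $\lambda_i$---but the content is the same, and the contradiction is obtained exactly as you describe in your last paragraph: work on a short interval immediately to the left of the collision point, where the plateau structure is fixed, and invoke the appropriate case of Lemma~\ref{lem:prop2}. Two small remarks: (1) the full connected component $(\alpha,\beta)$ and the ``chaining across finitely many sub-intervals'' are unnecessary; the local argument near $\beta$ that you identify at the end is the whole proof, so the bookkeeping burden you anticipate largely evaporates; (2) the paper only spells out the monotonicity of $\mu\mapsto u^\mu_i$ and leaves the symmetric analysis of $\mu\mapsto u^\mu_{i+1}$ implicit, whereas you are right that both are needed to force the contradiction (one is non-increasing, the other non-decreasing, so the gap cannot close from above as $\mu\nearrow\beta$).
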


\begin{proof}
\emph{Step 1.} We claim that if $u^{\widetilde{\lambda}}_i=u^{\widetilde{\lambda}}_{i+1}$ for some $\widetilde{\lambda}>0$, then $u^\lambda_i=u_{i+1}^\lambda$ for all $\lambda\in(0,\widetilde{\lambda}]$.
Indeed, let
\[
\bar{\lambda}:=\min\{\,\lambda \,:\, u^\mu_i=u^\mu_{i+1} \text{ fo all } \mu\in[\lambda,\widetilde{\lambda}]\,\}\,,
\]
and assume that $\bar{\lambda}>0$.
By continuity of $\lambda\mapsto u^\lambda$ there exists $\e>0$ such that $u_i^\lambda\neq u_{i+1}^\lambda$
for $\lambda\in(\bar{\lambda}-\e,\bar{\lambda})$.
Consider the case in which $u_i^\lambda<u_{i+1}^\lambda$ in $(\bar{\lambda}-\e,\bar{\lambda})$ (the other case can be treated similarly).

If $i=1$, then property (v) of Lemma \ref{lem:prop2} tells us that $\lambda\mapsto u_i^\lambda$ is decreasing in $(\bar{\lambda}-\e,\bar{\lambda})$ and thus it is not possible to have $u_i^{\bar{\lambda}}=u_{i+1}^{\bar{\lambda}}$.

If $i>1$, we can focus, without loss of generality, only on the following two cases: $u_{i-1}^\lambda>u_i^\lambda$ and
$u_{i-1}^\lambda<u_i^\lambda$ in $(\bar{\lambda}-\e,\bar{\lambda})$.

In the first case, we get a contradiction since by property (iii) of Lemma \ref{lem:prop2}, the map $\lambda\mapsto u^\lambda_i$ is decreasing in $(\bar{\lambda}-\e,\bar{\lambda})$ and thus, as above, we cannot have $u_i^{\bar{\lambda}}=u_{i+1}^{\bar{\lambda}}$.

In the other case, we have $u_{i-1}^\lambda<u_i^\lambda<u_{i+1}^\lambda$ in $(\bar{\lambda}-\e,\bar{\lambda})$. 
By using property (i) of Lemma \ref{lem:prop2}, we see that this is possible only if $u_i^\lambda=f_i$ for all
$\lambda\in(\bar{\lambda}-\e,\bar{\lambda})$. This yields the desired contradiction.\\

\emph{Step 2.} Let us define
\[
\lambda_i:=\max\{\, \lambda \,:\, u_i^\mu=u_{i+1}^\mu\,,\, \text{ for all } \mu\leq\lambda  \,\}\,.
\]
Step 1 and the continuity of $\lambda\mapsto u^\lambda$ ensure that $\lambda_i$ is well defined. Moreover, by Lemma \ref{lem:const}, we also get that $\lambda_i>0$ for all $i=i,\dots,k-1$. Finally, the fact that $u^\lambda\rightarrow f$ as $\lambda\rightarrow\infty$, tells us that $\lambda_i<\infty$
for all $i=1,\dots,k-1$. This concludes the proof.
\end{proof}

\begin{remark}
By a direct inspection of the proof of property (ii) of Proposition \ref{lem:prop2}, we see that the function
$\lambda\mapsto u^\lambda$ is continuous. Moreover, we can also say that it is smooth for all $\lambda\in(0,\infty)\meno S$,
where $S:=\{\lambda_1,\dots,\lambda_{k-1}\}\cup T$, where the $\lambda_i$'s are given by Theorem \ref{thm:split},
and $T:=\{\mu_1,\dots,\mu_k\}$ where $\mu_i:=\inf\{\lambda \,:\, u^\lambda_i=f_i \}$.
\end{remark}

Finally, we derive another consequence of Lemma \ref{lem:prop2} that will ensure that the solution is monotone where $f$ is and with the same monotonicity.

\begin{proposition}\label{prop:mono}
Suppose that $f_i<f_{i+1}<\dots<f_{i+r}$.
Then the solution $u$ of the minimization problem \eqref{eq:ming} is such that $u_i\leq u_{i+1}\leq\dots\leq u_{i+r}$.

In particular, $u$ has the following structure:
\begin{itemize}
\item if $u_i\geq f_{i+r}$, then $u_j=u_i$ for all $j=i,\dots,i+r$,
\item if $u_{i+r}\leq f_i$, then $u_j=u_{i+r}$ for all $j=i,\dots,i+r$,
\item otherwise, $u$ is of the form
\begin{equation*}
u_j=\left\{
\begin{array}{ll}
u_i & \text{ for } j=i,\dots,j_1\,, \\
f_j & \text{ for } j=j_1+1,\dots,j_2-1 \,, \\
u_{i+r} & \text{ for } j=j_2,\dots,k\,, \\
\end{array}
\right.
\end{equation*}
for some $f_{j_1}\leq u_i<f_{j_1+1}$ and
$f_{j_2}\leq u_{i+r}<f_{j_2+1}$, where $j_1<j_2$.
\end{itemize}

A similar statement holds in the case $f_i>f_{i+1}>\dots>f_{i+r}$. 
\end{proposition}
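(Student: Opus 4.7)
The plan is to prove the monotonicity $u_i \leq \cdots \leq u_{i+r}$ first, and then read off the three-case structure from it. For the monotonicity I argue by contradiction: assuming $u_j > u_{j+1}$ for some $i \leq j < i+r$, let $\mathcal{P} = \{s, \ldots, t\}$ be the rightmost plateau of $u$ on which the maximum $M := \max\{u_l : i \leq l \leq i+r\}$ is attained. Since $u$ fails to be non-decreasing on $[i, i+r]$, one has $t < i+r$ and $u_{t+1} < M$. A pointwise variation of $u_t$ alone (breaking the plateau at its right endpoint) then yields $M = u_t \leq f_t$: a positive $\varepsilon$-perturbation produces a total-variation cost of $2\varepsilon$ against a fidelity benefit of order $\varepsilon$ available only when $u_t < f_t$, while a negative $\varepsilon$-perturbation keeps TV unchanged and strictly decreases fidelity whenever $u_t > f_t$, contradicting minimality of $u$. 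This is essentially a pointwise version of Lemma \ref{lem:prop2} (ii)/(iv), tailored to also handle the boundary configuration where $\mathcal{P}$ extends strictly to the left of $i$.

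Given $M \leq f_t$, I would iterate rightward. Let $\mathcal{Q}_1, \mathcal{Q}_2, \ldots$ denote the successive plateaus of $u$ immediately to the right of $\mathcal{P}$, considering their intersection with $[i, i+r]$. An induction gives $u|_{\mathcal{Q}_k} < \min f|_{\mathcal{Q}_k \cap [i, i+r]}$ for every $k$: the base step uses $u|_{\mathcal{Q}_1} < M \leq f_t < f_{t+1}$ and the inductive step invokes the strict monotonicity of $f$ on $[i, i+r]$. This inequality contradicts Lemma \ref{lem:prop2} (iii) whenever $\mathcal{Q}_k$ is a local minimum plateau, and it contradicts Lemma \ref{lem:prop2} (i) at the last plateau $\mathcal{Q}_N$ reaching $i+r$, whose value must lie in the convex hull of $\{f_l : l \in \mathcal{Q}_N\}$. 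The main obstacle I anticipate is the borderline subcase where the descending chain escapes through the right boundary of the region (i.e., $\mathcal{Q}_N$ extends past $i+r$ and $u_{i+r+1} \leq u_{i+r}$), in which a second single-coordinate variation at $u_{i+r}$, analogous to the one used for $u_t$, is needed to derive $u_{i+r} \geq f_{i+r}$ and close the argument.

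Once monotonicity is established, the plateaus $P_1, \ldots, P_N$ of $u$ in $[i, i+r]$ are ordered by strictly increasing values $c_1 < \cdots < c_N$. Each interior plateau $P_k$ ($2 \leq k \leq N-1$) has left neighbor below and right neighbor above, so Lemma \ref{lem:prop2} (i) identifies $c_k$ as the unconstrained minimizer of $\sum_{l \in P_k} L_l |c - f_l|^p$; for a singleton $P_k = \{l\}$ this is exactly $u_l = f_l$. To exclude multi-index interior plateaus I would construct a strictly better competitor by replacing the constant $c_k$ on $P_k$ with the isotonic projection of $(f_l)_{l \in P_k}$ onto the box $[c_{k-1}, c_{k+1}]$: the new values are non-decreasing and stay in $(c_{k-1}, c_{k+1})$, so the total variation is unchanged, while the strict convexity of $|\cdot|^p$ for $p > 1$ combined with the strict monotonicity of $f$ force a strict decrease in the fidelity term, contradicting minimality. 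The three-case dichotomy now follows: if $u_i \geq f_{i+r}$ or $u_{i+r} \leq f_i$, any interior singleton at value $f_l$ would violate $u_i \leq u_l \leq u_{i+r}$, so $u$ must be constant on $[i, i+r]$; otherwise $u$ takes the announced three-block form, with $j_1$ (resp.\ $j_2$) the right (resp.\ left) endpoint of the leftmost (resp.\ rightmost) plateau, and the bounds on $u_i$ and $u_{i+r}$ are extracted from Lemma \ref{lem:prop2} applied to the two boundary plateaus, using (i)--(iii) in the interior and (iv)--(vii) when these plateaus touch $\{1\}$ or $\{k\}$. The decreasing case $f_i > \cdots > f_{i+r}$ is handled by the symmetric argument.
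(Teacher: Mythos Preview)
Your Step~1 contains a genuine gap. You claim that if $u$ fails to be non-decreasing on $\{i,\dots,i+r\}$ and $\mathcal{P}=\{s,\dots,t\}$ is the \emph{rightmost} plateau on which the maximum $M=\max_{i\le l\le i+r}u_l$ is attained, then necessarily $t<i+r$. This is false: take $r=3$ and $(u_i,u_{i+1},u_{i+2},u_{i+3})=(1,3,2,3)$. The sequence is not monotone, yet the rightmost maximal plateau is the singleton $\{i+3\}$, so $t=i+r$ and there is no descent to the right of $\mathcal P$ for your variation of $u_t$ to exploit. Replacing ``rightmost'' by ``leftmost'' removes this particular obstruction, but then your rightward iteration through $\mathcal Q_1,\mathcal Q_2,\dots$ may meet a later plateau at level $M$, and the strict inequality $u|_{\mathcal Q_k}<\min f|_{\mathcal Q_k}$ on which the induction rests no longer propagates. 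The paper sidesteps all of this with a short local argument: given a violation $u_{j-1}>u_j$, a single-coordinate perturbation of $u_j$ (when $u_j<f_j$, or when $u_j>f_j$ and $u_j>u_{j+1}$) or of $u_{j_m}$ (the first index in $\{i,\dots,j-1\}$ with $u_{j_m}>u_{j_m+1}$, in the remaining cases, where one checks $u_{j_m}>f_{j_m}$) strictly decreases $G$. No global plateau bookkeeping is needed.

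Your Step~2 is correct in spirit but works much harder than necessary. Once $u_i\le u_{i+1}\le\cdots\le u_{i+r}$ is known, the contribution of this block to the total variation telescopes to $u_{i+r}-u_i$, a quantity that does not depend on the interior values $u_{i+1},\dots,u_{i+r-1}$. Minimality of $u$ therefore forces these interior values to minimise $\sum_{j}L_j|v_j-f_j|^p$ over the order simplex $\{u_i\le v_{i+1}\le\cdots\le v_{i+r-1}\le u_{i+r}\}$; since the $f_j$ are already strictly increasing, the minimiser is simply the coordinatewise clipping $v_j=\min\bigl(u_{i+r},\max(u_i,f_j)\bigr)$, and the three-case structure drops out immediately. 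Your route through Lemma~\ref{lem:prop2}~(i), the exclusion of multi-index interior plateaus via an isotonic-projection competitor, and the separate treatment of the boundary plateaus reaches the same destination, but by a considerably longer path and with some loose ends (for instance, ruling out the two-plateau configuration when $u_i\ge f_{i+r}$ is not something Lemma~\ref{lem:prop2} delivers directly at a fixed $\lambda$).
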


\begin{proof}
\emph{Step 1.} We claim that $u_i\leq u_{i+1}\leq\dots\leq u_{i+r}$.

Suppose that $u_{j-1}>u_j$ for some $j\in\{i+1,\dots,i+r\}$.
We have to treat three cases: $u_j<f_j$, $u_j=f_j$ and $u_j>f_j$.

In the first case, we get a contradiction with the minimality of $u^\lambda$ since it is easy to see that
\[
G(u^\lambda_1,\dots,u^\lambda_{j-1},u^\lambda_j+\e,u^\lambda_{j+1},\dots,u_k)<G(u^\lambda)\,,
\]
for $\e>0$ small.

Now, suppose $u_j>f_j$ and that $u_j>u_{j+1}$. Then, for $\e>0$ small,
\[
G(u^\lambda_1,\dots,u^\lambda_{j-1},u^\lambda_j-\e,u^\lambda_{j+1},\dots,u_k)<G(u^\lambda)\,,
\]
yielding the desired contradiction.

Finally, we can treat all the remaining cases (namely when $u_j=f_j$ or the case where $u_j>f_j$ and $u_{j+1}>u_j$) simultaneously as follows: let us denote by $j_m\in\{i,\dots,j\}$ be the minimum index $r$ such that $u_r>u_{r+1}$. In both cases we have $u_{j_m}>f_{j_m}$, and thus, 
\[
G(u^\lambda_1,\dots,u^\lambda_{j_m-1},u^\lambda_{j_m}-\e,u^\lambda_{j_m+1},\dots,u_k)<G(u^\lambda)\,,
\]
for $\e>0$ small.\\

\emph{Step 2.} Using Step 1, we have that
\[
\sum_{j=i+1}^{i+r} |u^\lambda_j-u^\lambda_{j-1}|=u^\lambda_{i+r}-u^\lambda_i\,.
\]
Since this value is invariant under modification of $u^\lambda_j$ for $j=i+1,\dots,i+r-1$,
if we keep $u_i$ and $u_{i+r}$ fixed, the minimality of $u^\lambda$ implies that
\[
\sum_{j=i}^{i+r}|u_j-f_j|^p=\min_\mathcal{A}\sum_{j=i}^{i+r}|v_i-f_i|^p\,,
\]
where
\[
\mathcal{A}:=\{ (v_{i+1},\dots,v_{i+r-1})\in\R^{i+r-2} \,:\, u_i\leq v_{i+1}\leq\dots\leq v_{i+r-1}\leq u_{i+r}\}\,.
\]
This proves the second part of the statement of the proposition.
\end{proof}


\section{A method for finding the solution.}

In this section we describe the method we propose in order to identify the solution of the minimization problem \eqref{eq:ming} in the case $p>1$.

The general idea is, for every $\lambda>0$, to be able to tell \emph{a priori} the relative position of each $u^\lambda_i$ with respect to $u^\lambda_{i-1}$ and $f_i$. Knowing that allows us to:
\begin{itemize}
\item[(i)]  know if the minimization of $G$ has to take place in some subspace $\{v_{i_1-1}=v_{i_1}\}\cap\dots\cap\{v_{i_r-1}=v_{i_r}\}$, and hence if we have to reduce the number of variables $G$ depends on,
\item[(ii)] write explicitly the absolute values present in the expression of $G$.
\end{itemize}
If we are able to do that, we can reduce the problem of minimizing the functional $G$ to the problem of minimizing a strictly convex functional of class $C^1$, and thus the minimizer can be found by solving the appropriate Euler-Lagrange equation.\\

Let us now explain how we are able to make our prediction.
For the sake of clarity, let us assume that our initial data $f$ is like in the figure below.

\begin{figure}[H]
\includegraphics[scale=0.6]{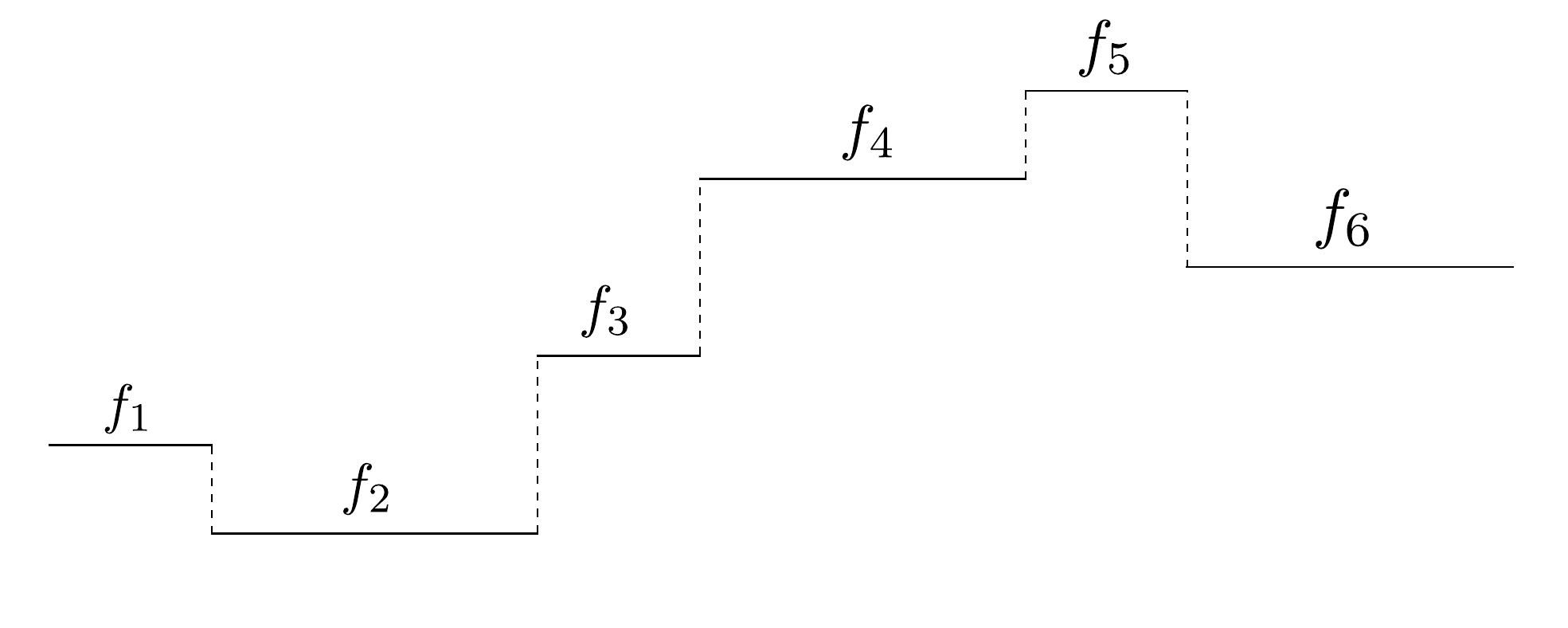}
\caption{The initial data $f$.}
\end{figure}

\emph{Step 1: Solutions for $\lambda$ large.}
By Proposition \ref{lem:prop1} we know that for $\lambda\gg1$ the solution $u^\lambda$ is such that $u_i$ is very close to $f_i$. In particular $u_{i-1}\neq u_i$ for every $i=2,\dots,k$.
Moreover, we also know what the relative position of $u^\lambda_i$ with respect to $u^\lambda_{i-1}$ is.
Using the properties given by Lemma \ref{lem:prop2} and Proposition \ref{prop:mono}, will also allow us to know the relative position of each $u^\lambda_i$ with respect to $f_i$.

\begin{figure}[H]
\includegraphics[scale=0.6]{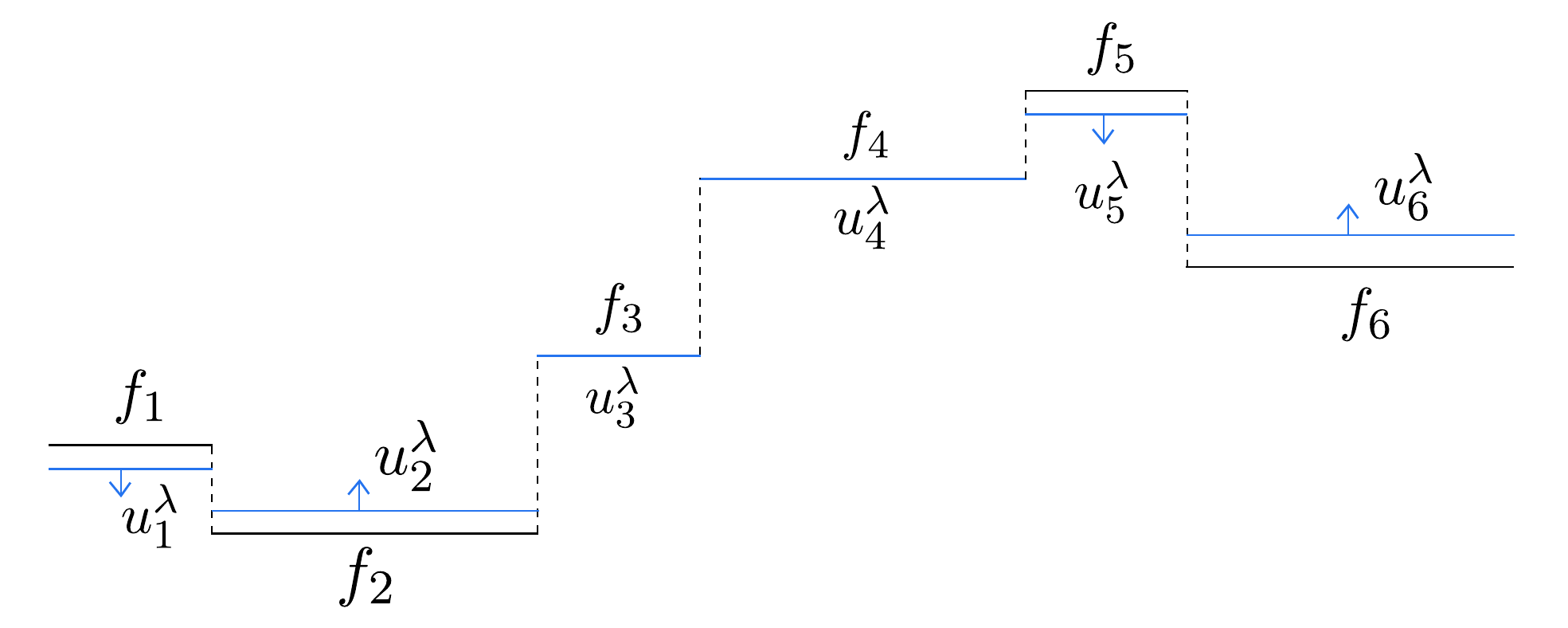}
\caption{The behavior of the solution for $\lambda\gg1$ as $\lambda$ decreases.}
\end{figure}

Thus, we are able to determine $\bar{r}_i,\bar{s}_i,\bar{t}_i\in\{0,1\}$ for which
\[
G(u^\lambda)=\sum_{i=2}^k (-1)^{\bar{s}_i}(u^\lambda_i-u^\lambda_{i-1})+\lambda\sum_{i=1}^k \bar{r}_i L_i \bigl((-1)^{\bar{t}_i}(f_i-u^\lambda_i)\bigr)^p\,,
\]
for all $\lambda\gg1$. This tells us that we can find $u^\lambda$ by solving the Euler-Lagrange equation of the function
\[
G(v)=\sum_{i=2}^k (-1)^{\bar{s}_i}(v_i-v_{i-1})+\lambda\sum_{i=1}^k L_i \bigl((-1)^{\bar{t}_i}(f_i-v_i)\bigr)^p\,.
\]
In particular, for all $i$ such that $\bar{r}_i\neq 0$ - for which we already know that $u^\lambda_i=f_i$ - we get
\[
u^\lambda_i=f_i-(-1)^{\bar{t}_i}\left[\, \frac{(-1)^{\bar{s}_i}-(-1)^{\bar{s}_{i-1}}}{p\lambda L_i}  \,\right]^{\frac{1}{p-1}}\,.
\]
Notice that how big $\lambda$ has to be in order to apply what we said above will be determined explicitly in the next step.\\

\emph{Step 2. Solutions for all smaller $\lambda$'s}. We now let $\lambda$ decrease. Since for small $\lambda$'s we know that the solution $u^\lambda$ is constant, the continuity of the function $\lambda\mapsto u^\lambda$ implies that, eventually, a \emph{critical} event will happen. That is, two neighboring values of $u^\lambda$ will coincide.
Notice that multiple critical events can happen simultaneously.

\begin{figure}[H]
\includegraphics[scale=0.6]{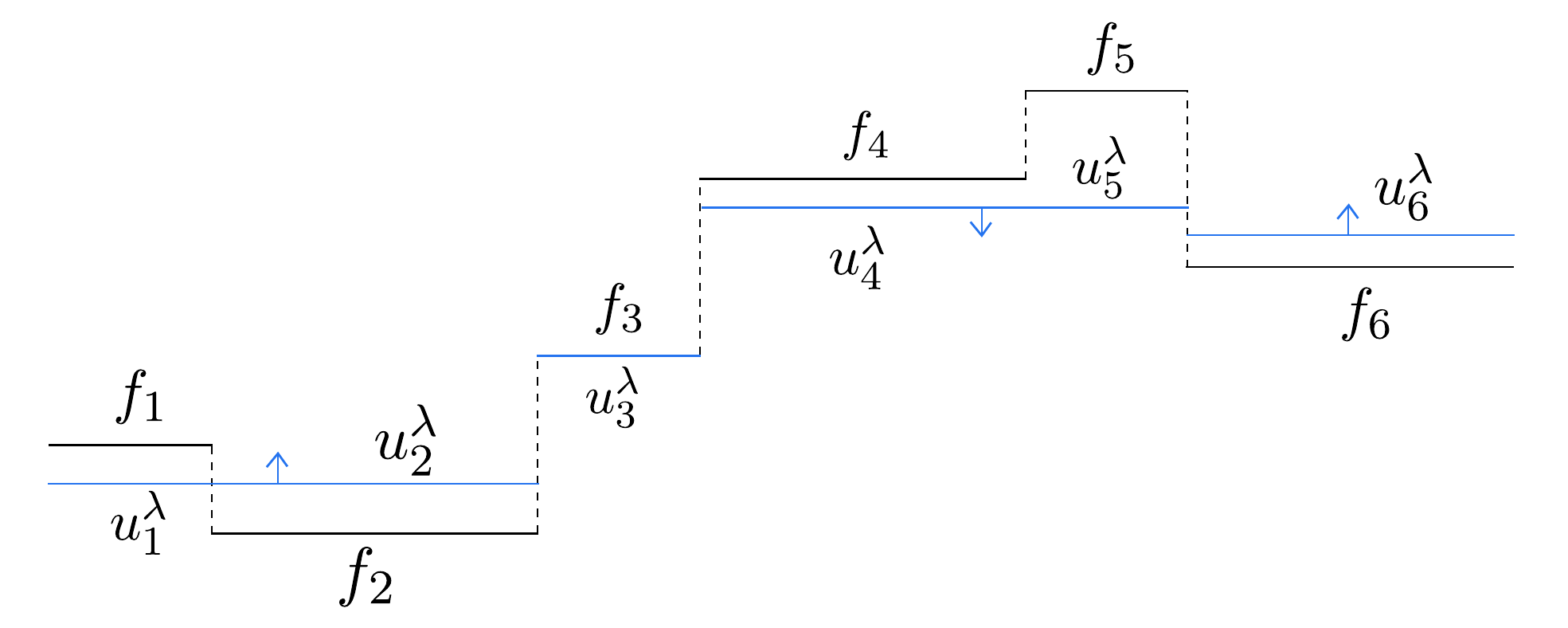}
\caption{The behavior of the solution for $\lambda$ after the first critical event as $\lambda$ decreases.}
\end{figure}

Assume, for instance, that for some $\lambda_j$, $u^{\lambda_j}_j$ happens to be equal to $u^{\lambda_j}_{j-1}$ and that this is the only critical event taking place.
By Theorem \ref{thm:split} we know that the same will be true for all smaller $\lambda$'s, that is $u^\lambda_j=u^\lambda_{j-1}$ for all  $\lambda\leq\lambda_j$. So that we now have to consider the functional $G$ restricted to the subspace $\{v_j=v_{j-1}\}$, that is
\[
\widetilde{G}(w_1,\dots,w_{k-1}):=G(w_1,\dots,w_{j-1},w_j,w_j,w_{j+1},\dots,w_{k-1})\,.
\]
In particular, we get a reduction of the number of variables $G$ depends on. Notice that, for $\lambda=\lambda_j$, the function $\widetilde{G}$ has no issues of differentiability. So that the minimizer $u^\lambda$ can be found by solving the Euler-Lagrange equation for $\widetilde{G}$.\\

\emph{Step 3. Find all the solutions.} We just repeat Step 2 until $u^\lambda_i=u^\lambda_i$ for all index $i=1,\dots,k-1$.
That would be the value of $\lambda$ for which we stop, Indeed, by Lemma \ref{lem:const}, we know that the solution will remain constant for all smaller values of $\lambda$.

Notice that, after the first critical event described in Step 2, another kind of critical event can take place. Namely, it can happen that some $u^\lambda_i$ will change its relative position with respect to $f_i$. If that happens, we just have to change the corresponding $\bar{t}_i$ and/or $\bar{r}_i$.\\

\medskip

\textbf{Example.} We illustrate the above strategy with an example.
For simplicity, we will treat the case $p=2$.

Suppose that $k=6$, take
\[
L_1=L_3=L_5=1\,,\quad\quad L_2=L_4=L_6=2\,.
\]
Consider the initial data $f$ given by
\[
f_1=2,\quad f_2=1,\quad f_3=3,\quad f_4=5,\quad f_5=6,\quad f_6=4\,.
\]

\begin{figure}[H]
\includegraphics[scale=0.6]{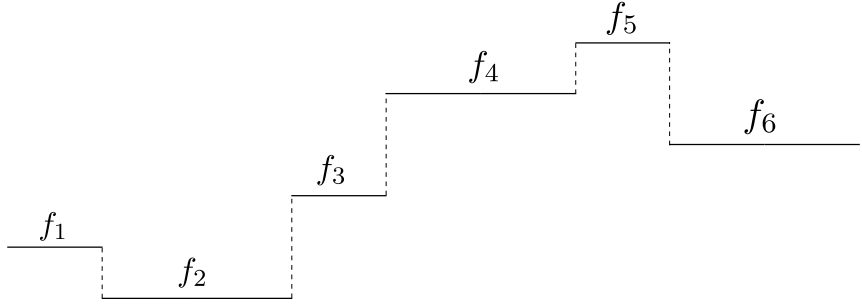}
\caption{The initial data $f$.}
\end{figure}

For $\lambda\gg1$, we know that we have to consider the following functional
\begin{align*}
G(u_1,u_2,u_3,u_4,u_5,u_6)&:= u_1-2u_2+2u_5-u_6+\lambda[\,(2-u_1)^2
    +2(1-u_2)^2\\
&\hspace{0.5cm}+|u_3-3|^2+2|u_4-5|^2+(6-u_5)^2+2(u_6-4)^2\,]\,.
\end{align*}
In particular, we obtain that the solution $u^\lambda$ is given by
\[
\begin{array}{lllll}
u_1^\lambda:=2-\frac{1}{2\lambda}\,,&&
u_2^\lambda:=1+\frac{1}{2\lambda}\,,&&
u_3^\lambda:=3\,,\\
&&&&\\
u_4^\lambda:=5\,,&&
u_5^\lambda:=6-\frac{1}{\lambda}\,,&&
u_6^\lambda:=4+\frac{1}{4\lambda}\,.
\end{array}
\]
for $\lambda>1$.

\begin{figure}[H]
\includegraphics[scale=0.6]{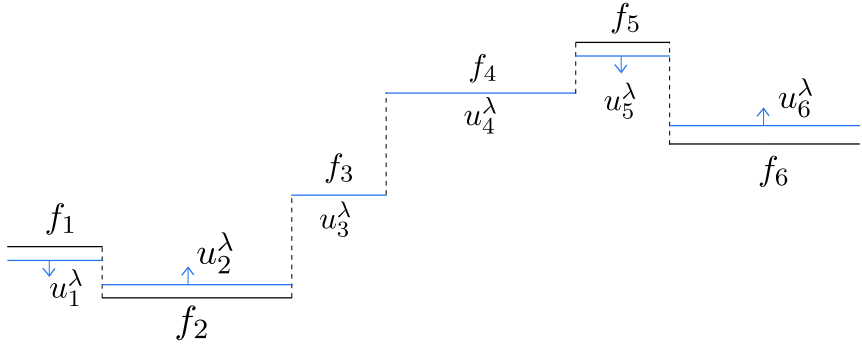}
\caption{The behavior of the solution for $\lambda>1$ as $\lambda$ decreases.}
\end{figure}

The first \emph{critical event} happens for $\lambda=1$, when
$u_1^\lambda=u_2^\lambda$ and $u_4^\lambda=u^\lambda_5$.
For smaller values of $\lambda$, we have to consider the functional
\begin{align*}
G(v_1,v_2,v_3,v_4)&:= 2v_3-v_1-v_4 + \lambda[\, (2-v_1)^2+
    2(v_1-1)^2+|v_3-3|^2\\
&\hspace{0.5cm}+2(5-v_3)^2+(6-v_3)^2+2(v_4-4)^2 \,]\,.
\end{align*}
Here, the solution is given by
\[
\begin{array}{lllll}
u_1^\lambda:=\frac{4}{3}+\frac{1}{6\lambda}\,,&&
u_2^\lambda:=\frac{4}{3}+\frac{1}{6\lambda}\,,&&
u_3^\lambda:=3\,,\\
&&&&\\
u_4^\lambda:=\frac{16}{3}-\frac{1}{3\lambda}\,,&&
u_5^\lambda:=\frac{16}{3}-\frac{1}{3\lambda}\,,&&
u_6^\lambda:=4+\frac{1}{4\lambda}\,.
\end{array}
\]

\begin{figure}[H]
\includegraphics[scale=0.6]{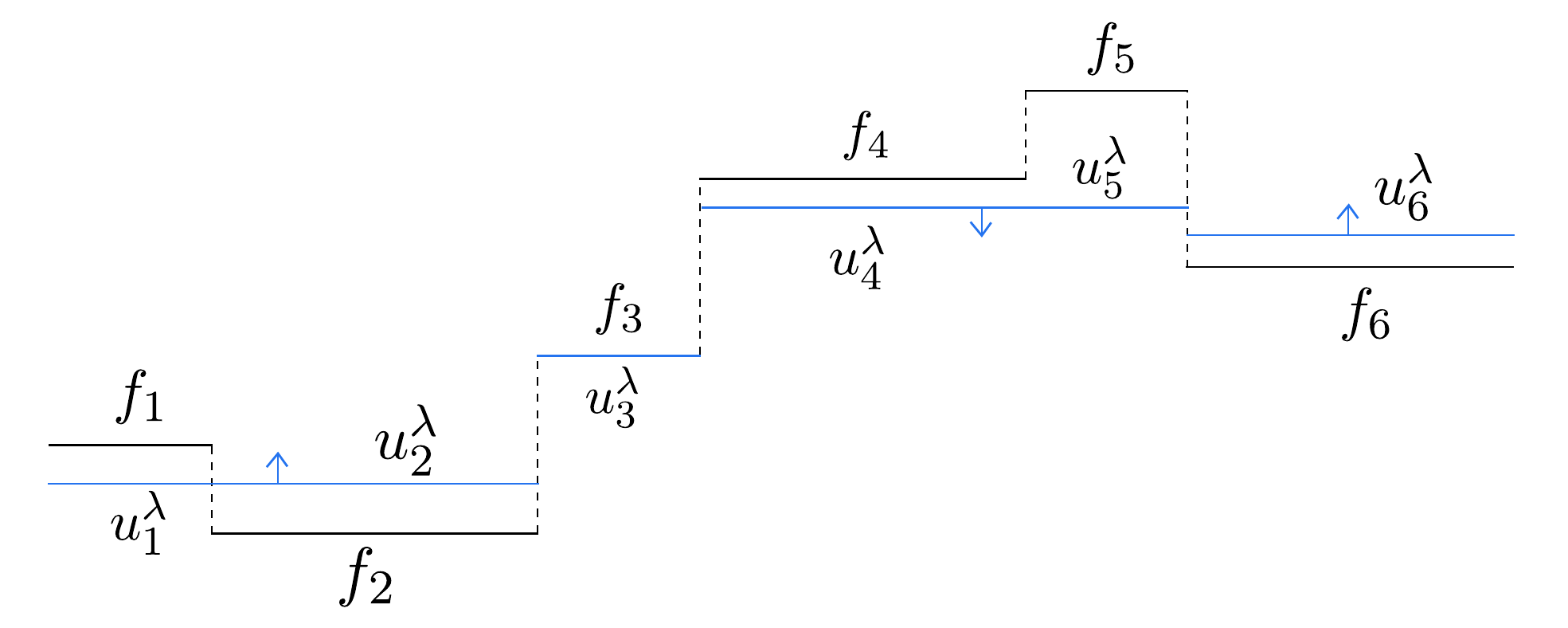}
\caption{The behavior of the solution for $\lambda\in(\frac{9}{14},1]$ as $\lambda$ decreases.}
\end{figure}

Then, for $\lambda=\frac{9}{14}$ we have that
$u_6^\lambda=u_5^\lambda$. Hence, the new functional we have to consider is
\begin{align*}
G(v_1,v_2,v_3)&:= v_3-v_1 + \lambda[\, (2-v_1)^2+
    2(v_1-1)^2+|v_2-3|^2\\
&\hspace{0.5cm}+2(5-v_3)^2+(6-v_3)^2+2(v_3-4)^2 \,]\,.
\end{align*}

\begin{figure}[H]
\includegraphics[scale=0.6]{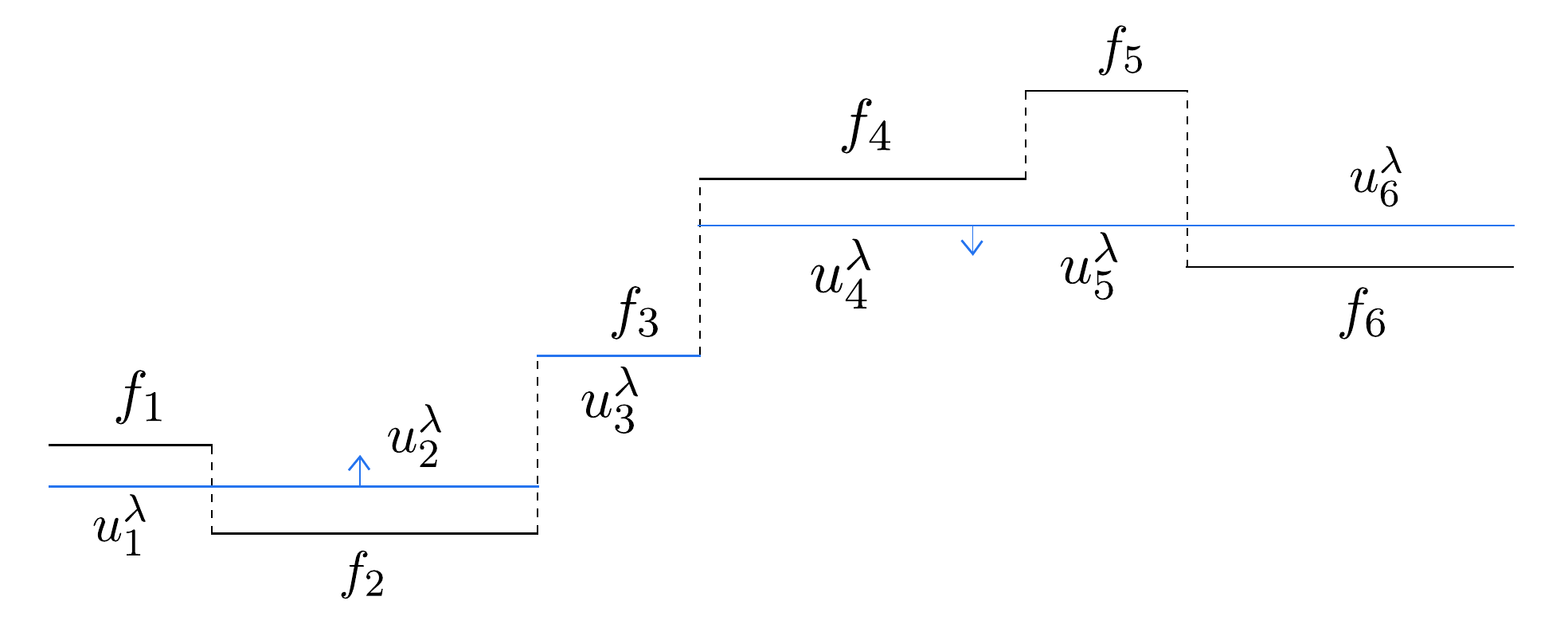}
\caption{The behavior of the solution for $\lambda\in(,\frac{1}{10},\frac{9}{14}]$ as $\lambda$ decreases.}
\end{figure}

The solution is now
\[
\begin{array}{lllll}
u_1^\lambda:=\frac{4}{3}+\frac{1}{6\lambda}\,,&&
u_2^\lambda:=\frac{4}{3}+\frac{1}{6\lambda}\,,&&
u_3^\lambda:=3\,,\\
&&&&\\
u_4^\lambda:=\frac{16}{3}-\frac{1}{6\lambda}\,,&&
u_5^\lambda:=\frac{16}{3}-\frac{1}{6\lambda}\,,&&
u_6^\lambda:=\frac{16}{3}-\frac{1}{6\lambda}\,.
\end{array}
\]

Notice that for $\lambda=\frac{1}{4}$ we have $u_1^\lambda=f_1$.
Thus, for $\lambda<\frac{1}{4}$, we have to consider the functional
\begin{align*}
G(v_1,v_2,v_3)&:= v_3-v_1 + \lambda[\, (v_1-2)^2+
    2(v_1-1)^2+|v_2-3|^2\\
&\hspace{0.5cm}+2(5-v_3)^2+(6-v_3)^2+2(v_3-4)^2 \,]\,.
\end{align*}
Hence, the solution remains equal to the previous ones.
For $\lambda=\frac{1}{10}$ we get $u_2^\lambda=u_3^\lambda$.
Then we consider the functional
\begin{align*}
G(v_1,v_2)&:= v_2-v_1 + \lambda[\, (2-v_1)^2+
    2(v_1-1)^2+|v_2-3|^2\\
&\hspace{0.5cm}+2(5-v_2)^2+(6-v_2)^2+2(v_2-4)^2 \,]\,.
\end{align*}

\begin{figure}[H]
\includegraphics[scale=0.6]{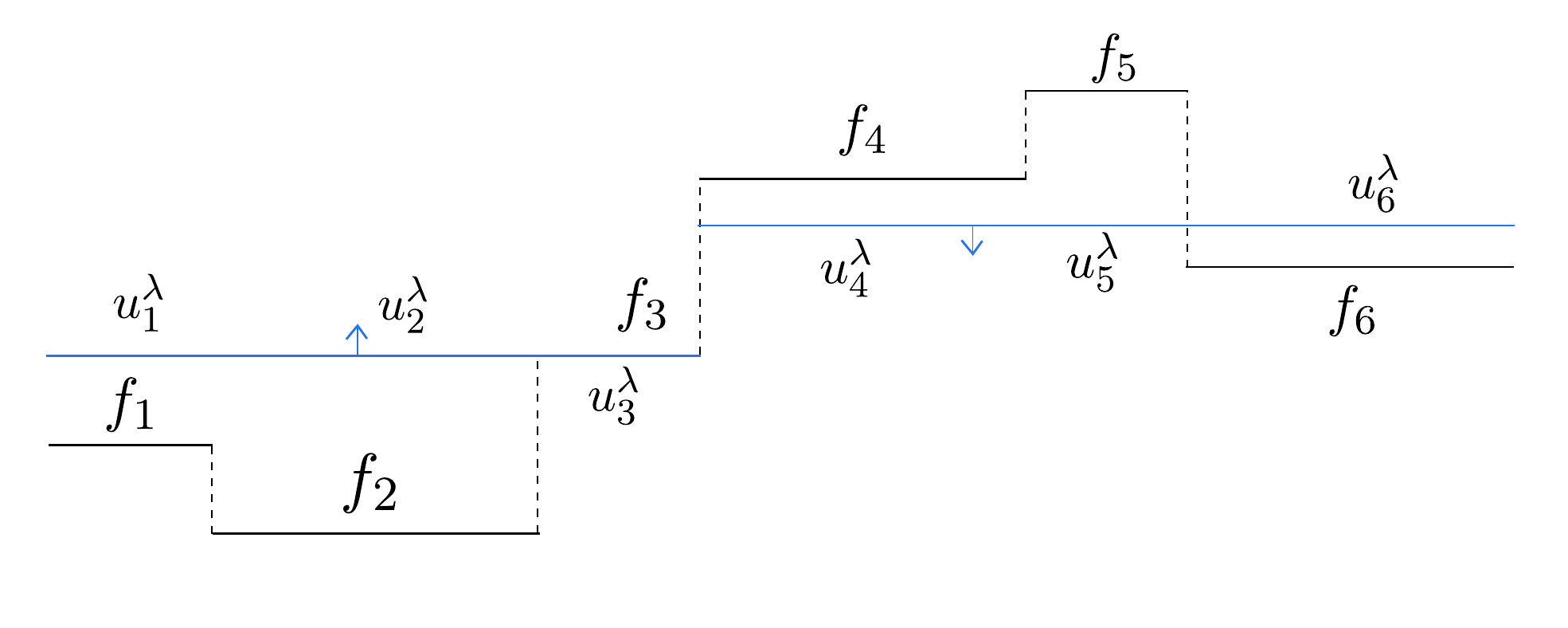}
\caption{The behavior of the solution for $\lambda\in(\frac{9}{122},\frac{1}{10}]$ as $\lambda$ decreases.}
\end{figure}

Such a functional is minimized by
\[
\begin{array}{lllll}
u_1^\lambda:=\frac{7}{4}+\frac{1}{8\lambda}\,,&&
u_2^\lambda:=\frac{7}{4}+\frac{1}{8\lambda}\,,&&
u_3^\lambda:=\frac{7}{4}+\frac{1}{8\lambda}\,,\\
&&&&\\
u_4^\lambda:=\frac{24}{5}-\frac{1}{10\lambda}\,,&&
u_5^\lambda:=\frac{24}{5}-\frac{1}{10\lambda}\,,&&
u_6^\lambda:=\frac{24}{5}-\frac{1}{10\lambda}\,.
\end{array}
\]

Finally, for $\lambda\leq\frac{9}{122}$ we have that the solution is given by
\[
u_1^\lambda=u_2^\lambda=u_3^\lambda=u_4^\lambda=u_5^\lambda=u_6^\lambda:=\frac{31}{9}\,.
\]

\begin{figure}[H]
\includegraphics[scale=0.6]{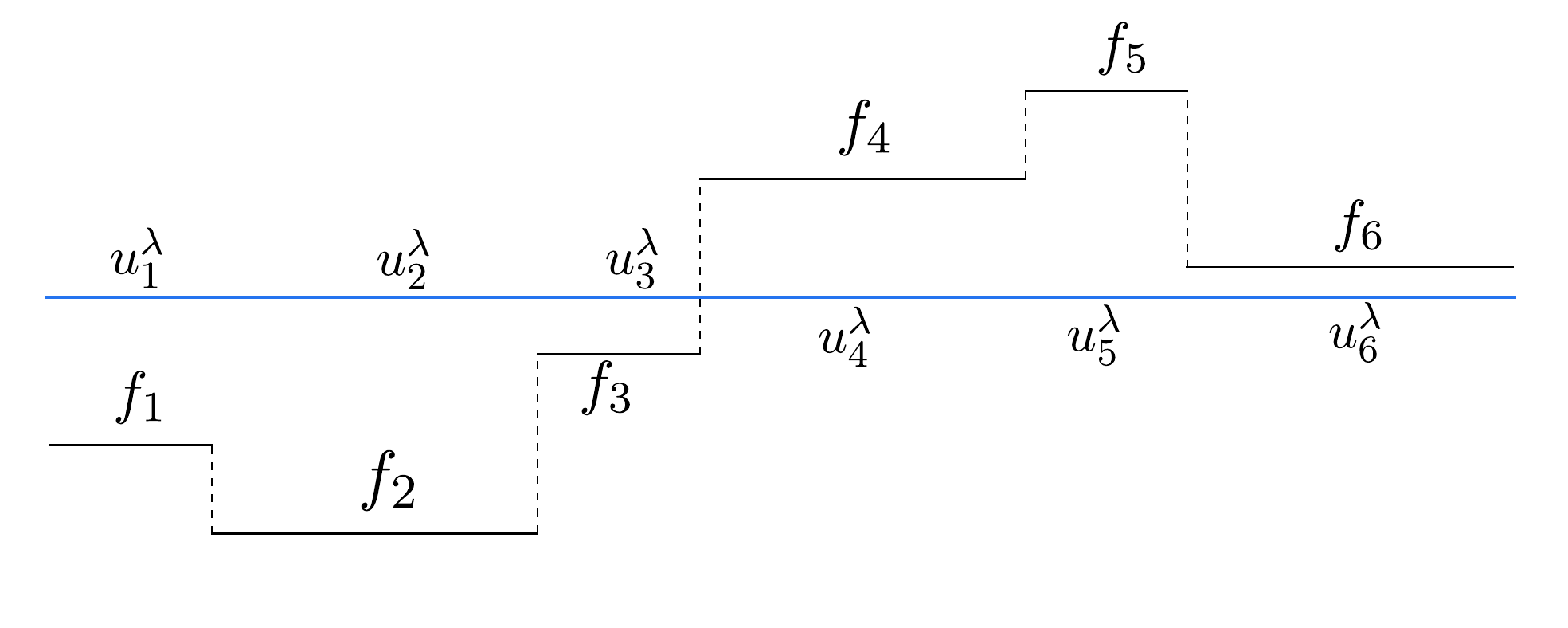}
\caption{The behavior of the solution for $\lambda<\frac{9}{122}$.}
\end{figure}

\begin{remark}
The previous example allows us to identify some properties of the solution $u^\lambda$:
\begin{enumerate}
\item it is \emph{not} true that if $u_i^{\bar{\lambda}}=f_i$, then 
$u_i^\lambda=f_i$ for all $\lambda\geq\bar{\lambda}$,
\item the function $\lambda\mapsto u_i^\lambda$ is \emph{not} monotone in general.
Nevertheless, a change in the monotonicity can happen only if $\lambda=\lambda_i$ or $\lambda=\lambda_{i-1}$,
\end{enumerate}
\end{remark}

\begin{remark}
Let us denote by $u^{\lambda,p}$ the solution of problem \eqref{eq:ming} corresponding to $p$ and $\lambda$.
Although we know that, for every $\lambda$ fixed, $u^{\lambda,p}\rightarrow v$ as $p\searrow1$, where $v$ is a solution of the problem \eqref{eq:ming} corresponding to $\lambda$ and $p=1$, we cannot apply directly our method to find $v$, since
analytic computations are difficult to perform in the case $p\in(1,2)$.
Nevertheless, a finer analysis of the behavior of the solution $u^{\lambda,p}$ for $p\in(1,2)$ is currently under investigation.
\end{remark}


\bigskip
\bigskip
\noindent
{\bf Acknowledgments.}
The author wishes to thank Irene Fonseca for having introduced him to the study of this problem and for helpful discussions during the preparation of the paper.

The author warmly thanks the Center for Nonlinear Analysis at Carnegie Mellon University for its support
during the preparation of the manuscript.
The research was funded by National Science Foundation under Grant No. DMS-1411646.


\bibliographystyle{siam} 
\bibliography{bibliografia}

\end{document}